\newcommand{\dc}[1]{#1}
\newcommand{\vdc}[1]{\mathbf{#1}}
\newcommand{\dcm}[1]{#1}
\newcommand{\rset}{\mathbb R}
\newcommand{\drset}{\hat{\mathbb R}}
\newcommand{\qset}{\mathbb H}
\newcommand{\dqset}{\hat{\mathbb H}}
\newcommand{\cset}{\mathbb C}
\newcommand{\dcset}{\hat{\mathbb C}}
\newcommand{\gset}{\mathbb G}
\newcommand{\dgset}{\hat{\mathbb G}}
\newcommand{\cgroup}{{\mathbb C}^{\times}}
\newcommand{\ucset}{\mathbb T}
\newcommand{\uqset}{\mathbb U}
\newcommand{\udcset}{\hat{\mathbb T}}
\newcommand{\udqset}{\hat{\mathbb U}}
\newcommand{\udset}{\hat{\mathbb V}}
\begin{document}
	\large
	
	\title{{Spectral Properties of Dual  Unit Gain Graphs}}
	\author{Chunfeng Cui\footnote{LMIB of the Ministry of Education, School of Mathematical Sciences, Beihang University, Beijing 100191, China.
			({\tt chunfengcui@buaa.edu.cn}). }
		\and \
		Yong Lu\footnote{School of Mathematics and Statistics, Jiangsu Normal University, Xuzhou, Jiangsu 221116, China.
			({\tt luyong@jsnu.edu.cn}).}
		\and \
		Liqun Qi\footnote{Department of Applied Mathematics, The Hong Kong Polytechnic University, Hung Hom, Kowloon, Hong Kong; Department of Mathematics, School of Science, Hangzhou Dianzi University, Hangzhou 310018, China.
			({\tt maqilq@polyu.edu.hk}).}
		\and and \
		Ligong  Wang\footnote{School of Mathematics and Statistics, Northwestern Polytechnical University, Xi'an, Shaanxi 710129, China.
			({\tt lgwangmath@163.com}).}
	}
	\date{\today}
	\maketitle

	\begin{abstract}
		
		In this paper, we study dual quaternion and dual complex unit gain graphs and their spectral properties  in a unified frame of  dual  unit gain graphs.    %\red{Dual  numbers can be regarded as  numbers with perturbations.   Thus, dual complex and dual quaternion numbers are natural extensions of complex and quaternion numbers, respectively}.
		{Unit dual quaternions represent rigid movements in the 3D space, and have wide applications in  robotics and computer graphics.   Dual complex numbers found application in brain science recently.}
		We establish the interlacing theorem for dual   unit gain graphs, and show that the spectral radius of a dual  unit gain graph is always not greater than the spectral radius of the underlying graph, and these two radii are equal if and only if the dual  gain graph is balanced.  By using the dual  cosine functions,  we establish the closed form of eigenvalues of adjacency and Laplacian matrices of dual complex  and  quaternion unit gain cycles.   We then show the  coefficient theorem holds for dual unit gain graphs.  Similar results hold for the spectral radius of the Laplacian matrix of the dual  unit gain graph too.

		\medskip

		% \medskip

		\textbf{Key words.} Unit gain graph, {dual quaternion number,}  dual complex number, adjacency matrix, Laplacian matrix, eigenvalue.
		
		\medskip
		\textbf{MSC subject classifications. 05C50, 05C22, 05C25.}
	\end{abstract}

	\renewcommand{\Re}{\mathds{R}}
	\newcommand{\rank}{\mathrm{rank}}
	\renewcommand{\span}{\mathrm{span}}
	\newcommand{\X}{\mathcal{X}}
	\newcommand{\A}{\mathcal{A}}
	\newcommand{\I}{\mathcal{I}}
	\newcommand{\B}{\mathcal{B}}
	\newcommand{\C}{\mathcal{C}}
	\newcommand{\OO}{\mathcal{O}}
	\newcommand{\e}{\mathbf{e}}
	\newcommand{\0}{\mathbf{0}}
	\newcommand{\dd}{\mathbf{d}}
	\newcommand{\ii}{\mathbf{i}}
	\newcommand{\jj}{\mathbf{j}}
	\newcommand{\kk}{\mathbf{k}}
	\newcommand{\va}{\mathbf{a}}
	\newcommand{\vb}{\mathbf{b}}
	\newcommand{\vc}{\mathbf{c}}
	\newcommand{\vq}{\mathbf{q}}
	\newcommand{\vg}{\mathbf{g}}
	\newcommand{\pr}{\vec{r}}
	\newcommand{\pc}{\vec{c}}
	\newcommand{\ps}{\vec{s}}
	\newcommand{\pt}{\vec{t}}
	\newcommand{\pu}{\vec{u}}
	\newcommand{\pv}{\vec{v}}
	\newcommand{\pn}{\vec{n}}
	\newcommand{\pp}{\vec{p}}
	\newcommand{\pq}{\vec{q}}
	\newcommand{\pl}{\vec{l}}
	\newcommand{\vt}{\rm{vec}}
	\newcommand{\vx}{\mathbf{x}}
	\newcommand{\vy}{\mathbf{y}}
	\newcommand{\vu}{\mathbf{u}}
	\newcommand{\vv}{\mathbf{v}}
	\newcommand{\y}{\mathbf{y}}
	\newcommand{\vz}{\mathbf{z}}
	\newcommand{\T}{\top}
	
	\newtheorem{Thm}{Theorem}[section]
	\newtheorem{Def}[Thm]{Definition}
	\newtheorem{Ass}[Thm]{Assumption}
	\newtheorem{Lem}[Thm]{Lemma}
	\newtheorem{Prop}[Thm]{Proposition}
	\newtheorem{Cor}[Thm]{Corollary}
	\newtheorem{example}[Thm]{Example}
	\newtheorem{remark}[Thm]{Remark}
	
	\section{Introduction}
	
	%	A gain graph assigns an element of a mathematical group to each of its edges, and if a group element is assigned to an edge, then the inverse of that group element is always assigned to the inverse edge of that edge \cite{CDD21,HG12,Re16}.    {If such a mathematical group} consists of unit numbers of a number system,  {then} the gain graph is called a unit gain graph.   The real unit gain graph is called a signed graph.   The study of signed graphs is the forerunner of the study of unit gain graphs.  It started in 2003 by Hou, Li and Pan \cite{HLP03} and  was continued in  \cite{ABHMS19, Be14, BP15, BS15, Fa05, FTZ08,  Ho05, St19, Za10}. {This was further extended to signed hypergraphs \cite{YYQ19}.}  In 2012, Reff \cite{Re12} started the research on complex unit gain graphs.   Since then, the study on  {complex gain graphs} has grown explosively {\cite{AABBd19, BB21, BB22, BBK23, BBR20, HHD20, HHY22, Kh24, LW20, LY20, LWH21, LWX17, LWZ19, LW21, MKS22, Sa23, SK21, SK22, WGF18, Wv22, WL23, XZWT20, YQT15, ZH22}}.  In 2022, Belardo,  {Brunettia, Coble, Reff and Skogman} \cite{BBCRS22} studied  {quaternion} unit gain graphs and their associated spectral theories, which  were further studied in \cite{KTP23, ZL23}.  The study of  {gain graphs} and their spectral properties forms an important part of spectral graph theory.
	
	A gain graph assigns an element of a mathematical group to each of its edges, and if a group element is assigned to an edge, then the inverse of that group element is always assigned to the inverse edge of that edge \cite{CDD21,HG12,Re16}.    {If such a mathematical group} consists of unit numbers of a number system,  {then} the gain graph is called a unit gain graph.   The real unit gain graph is called a signed graph %.   The study of signed graphs is the forerunner of the study of unit gain graphs.  It started in 2003 by Hou, Li and Pan \cite{HLP03} and  was continued in
	\cite{ABHMS19, Be14, Fa05, Ho05, HLP03}. {This was further extended to signed hypergraphs \cite{YYQ19}.}  Since 2012, %Reff \cite{Re12} started the research on complex unit gain graphs.   Since then,
		the study on  {complex unit gain graphs} has grown explosively {\cite{AABBd19,  HHD20,  LW20, Re12, WGF18}}. Then, started in 2022, %Belardo,  {Brunettia, Coble, Reff and Skogman} \cite{BBCRS22} studied
		{quaternion} unit gain graphs and their associated spectral theories  were studied in \cite{BBCRS22, KTP23, ZL23}.  The study of  unit gain graphs  and their spectral properties forms an important part of spectral graph theory.
	
	 Dual numbers, dual quaternions, dual complex numbers, and  their applications have a long history.   It was British mathematician William Kingdon Clifford who introduced dual numbers in 1873 \cite{Cl73}.  Then German mathematician Eduard Study introduced dual angles in 1903 \cite{St03}.   These started the study and applications of dual numbers in kinematics, dynamics,  robotics, and brain science \cite{CLQY22, Da99, WYL12,ACVL17,WDW24}. Especially,
	 unit dual quaternion is an efficient  mathematical tool to describe the rigid body movement in the 3D space \cite{WYL12}.  A unit dual quaternion  serves as both a specification of the configuration (position and orientation) of a rigid body and a transformation taking the coordinates of a point from one frame to another via rotation and translation.  	It has wide applications in  robot control \cite{ACVL17},  formation control \cite{WYL12},  hand-eye  calibration  \cite{CLQY22, Da99}, and  simultaneous localization and  mapping  \cite{CKJC16}, etc.
	{Recently, dual complex numbers found application in brain science \cite{WDW24}. Very recently, dual quaternion unit gain graphs were applied to multi-agent formation control \cite{QC24}.}  This stimulates us to study dual quaternion unit gain graphs further.
	
 In this paper, we study dual quaternion unit gain graphs as well as dual complex unit gain graphs.   We combine them in a unified frame of dual unit gain graphs.  Dual real unit gain graphs are nothing else, but signed graphs.  We adopt such a unified way to avoid unnecessary repetitions.

	In the next section, we review  some preliminary  knowledge of  gain graphs,  dual elements and dual  matrices.
	We study some basic properties of  dual unit gain graphs   in Section 3.
 By using the dual  cosine functions,  we  establish  the closed form of eigenvalues of adjacency and Laplacian matrices  of dual complex  and  quaternion unit  gain cycles. 	In Section 4, we discuss the properties of eigenvalues of adjacency matrices of dual   unit gain graphs.  We establish the interlacing theorem  and show that the spectral radius of a dual  unit gain graph is always not greater than the spectral radius of the underlying graph, and these two radii are equal if and only if the dual   unit gain graph is balanced.  We  show the coefficient theorem holds for  dual unit gain graphs in Section 5.
	 Similar results hold for the spectral radius of the Laplacian matrix of the  dual unit gain graph  too.  We study these  in Section 6.

 {\section{Gain Graphs, Dual Elements and Dual Matrices}
		
	 The field of real numbers, the field of complex numbers and the ring of quaternions are denoted  by  $\rset, \cset$ and $\qset$, respectively. Following \cite{QC24}, we use $\gset$ to represent them.   Thus, $\gset$ may be $\rset$ or $\cset$ or $\qset$.
			\medskip
		
		\subsection{Gain graphs}}
	
	Suppose $G=(V,E)$ is a  graph, where $V:=\{v_1,v_2,\dots,v_n\}$ and   $E:=\{e_{ij}\}$.
	Define $n:=|V|$ and $m:=|E|$. The degree of a vertex $v_j$ is denoted by $d_j=\text{deg}(v_j)$ and the maximum degree is $\Delta=\max d_j$.
	An oriented edge from $v_i$ to $v_j$ is denoted by $e_{ij}$.
	The set of oriented edges, denoted by $\vec{E}(G)$, contains two copies of each edge with opposite directions.
	{Then $\left|\vec{E}\right| = 2m$. Even though $e_{ij}$ stands for   an edge and an oriented edge simultaneously, it will always be clear in the content.
		By graph theory,  $G$ is bidirectional.}
	
	A {\sl gain graph} is a triple $\Phi=(G, \Omega,\varphi)$ consisting of an {\sl underlying graph} $G=(V,E)$, the {\sl gain group} $\Omega$ and the {\sl gain function} $\varphi: \vec{E}(G)\rightarrow \Omega$ such that $\varphi(e_{ij}) = \varphi^{-1}(e_{ji})$.
	The gain group may be  $\{ 1, -1 \}$, or $\cgroup = \{ p \in \cset |  p \not = 0 \}$, or $\ucset = \{ p \in \cset | | p | = 1 \}$, etc.  If the gain group $\Omega$ consists of unit elements, then such a gain graph is called a {\bf unit gain graph}.
	%In this paper, we will focus on  the dual complex unit gain graphs, denoted by $\udcset$-{gain} graphs.
	If there is no  {confusion},  {then} we   simply denote the gain graph as $\Phi=(G, \varphi)$.

	A {\sl switching function} is a function $\zeta: V\rightarrow \Omega$ that switches the  $\Omega$-gain graph $\Phi=(G,\varphi)$ to $\Phi^{\zeta}=(G,\varphi^\zeta)$, where
	\[\varphi^{\zeta}(e_{ij}) = \zeta^{-1}(v_i)\varphi(e_{ij})\zeta(v_j).\]
	In this case, $\Phi$ and $\Phi^{\zeta}$ are switching equivalent, denoted by $\Phi\sim\Phi^{\zeta}$.
	Further, denote  the switching class of $\varphi$ as $[\Phi]$, which is the set of  gain graphs switching equivalent to $\Phi$.
	The gain of a walk $W=v_1e_{12}v_2e_{23}v_3\cdots v_{k-1}e_{k-1,k}v_k$ is
	\begin{equation}\label{equ:gain_walk}
		\varphi(W)=\varphi(e_{12})\varphi(e_{23})\cdots\varphi(e_{k-1,k}).
	\end{equation}
	A walk $W$ is {\sl neutral} if $\varphi(W)=1_{\Omega}$, where $1_{\Omega}$ is the identity of $\Omega$.
	An edge set $S\subseteq E$ is {\sl balanced} if every {cycle} $C\subseteq S$ is neutral. A subgraph is balanced if  its edge set is balanced.
	
	{Let $\Phi=(G,\Omega, \varphi)$ be a gain graph {and  {$A(\Phi) = (a_{ij}(\Phi))$} and  $L(\Phi)$ be the adjacency and Laplacian matrices of $\Phi$, respectively. Define %the adjacency and Laplacian matrices
			$A(\Phi)$ and $L(\Phi)$}
		via the gain function  {$\varphi(e)$} as follows,
		\begin{equation}\label{mat:Adj_phi}
			a_{ij}{(\Phi)} = \left\{
			\begin{array}{cl}
				\varphi(e_{ij}),  &  \text{if } e_{ij}\in {E(\Phi)},\\
				0, & \text{otherwise},
			\end{array}
			\right.  \ \text{ and } L(\Phi) =   D(\Phi)- A(\Phi).
		\end{equation}
		Here,  $\varphi(e_{ij}) \in {\Omega}$, $\varphi(e_{ij})=\varphi(e_{ji})^{-1}=\overline{\varphi(e_{ji})}$,  and $D(\Phi)$ is an $n\times n$ diagonal matrix with each diagonal element being the degree of the corresponding vertex {in its underlying graph
			$G$}.}

	Denote by $-\Phi$ the  {gain graph  $(G, \Omega, -\varphi)$} obtained by replacing the gain of each edge with its opposite.
	Clearly, $A(-\Phi) = -A(\Phi)$. Furthermore, $\Phi$ is {\sl antibalanced} if and only if $-\Phi$ is balanced.
	%We say $\Phi_1$ and  $\Phi_2$ are isomorphic if there exists a graph isomorphism that preserves edge gains.
	%When  $\Phi_1$ is isomorphic to a switching of  $\Phi_2$, then we say they are switching isomorphic.
	%If $\Phi$ and $?\Phi$ are switching isomorphic, then $\Phi$ is said to be gain-symmetric.
	%When $\Phi$  is gainsymmetric, since  $A(\Phi)$ and  $A(-\Phi) = -A(\Phi)$ are similar matrices, then the adjacency
	%spectrum of $A$  is symmetric w.r.t. the origin.

	%The \blue{reasonability} of the desired relative configuration and the desired formation in \eqref{desrel} can be generalized to the gain function and  {\sl potential function}  as follows.
	{A} {\sl potential function} is a function $\theta: V\rightarrow \Omega$ such that  for every $e_{ij}\in {E(\Phi)}$,
	\begin{equation}\label{equ:potential}
		\varphi(e_{ij}) = \theta(v_i)^{-1}\theta(v_j).
	\end{equation}
	We write $(G,1_{\Omega})$ as the $\Omega$-gain graph with all neutral edges.
	Note {the potential function} $\theta$ is not unique since for {any %{invertible variable}
		$q\in\Omega$},  $\tilde\theta(v_i) = {\frac{q}{|q|}}\theta(v_i)$ for all $v_i\in V$ is also a potential function of $\Phi$.

	The following {result can be deduced from \cite{Za89}.  Also see \cite{BBCRS22}.}

	\begin{Lem} \label{lem:balance}
		Let {$\Phi=(G,\Omega, \varphi)$ be a gain} %\Omega$-gain
		graph. Then the following are equivalent:
		\begin{itemize}
			\item[(i)] $\Phi$ is balanced.
			\item[(ii)] $\Phi\sim(G,1_{\Omega})$.
			\item[(iii)] $\varphi$ has a potential function.
		\end{itemize}
	\end{Lem}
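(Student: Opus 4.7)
The plan is to prove the cycle of implications (iii)$\Rightarrow$(ii)$\Rightarrow$(i)$\Rightarrow$(iii), exploiting commutativity of $\udcset$ throughout but in a way that matches Zaslavsky's original argument.

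For (iii)$\Rightarrow$(ii), given a potential function $\theta$, I would define the switching function $\zeta(v_i):=\theta(v_i)^{-1}$ and verify directly from (\ref{equ:potential}) and the definition of $\varphi^\zeta$ that
\[
\varphi^\zeta(e_{ij}) \;=\; \zeta(v_i)^{-1}\varphi(e_{ij})\zeta(v_j) \;=\; \theta(v_i)\bigl(\theta(v_i)^{-1}\theta(v_j)\bigr)\theta(v_j)^{-1} \;=\; 1_{\udcset},
\]
so $\Phi^\zeta=(G,1_{\udcset})$ and hence $\Phi\sim(G,1_{\udcset})$. For (ii)$\Rightarrow$(i), the trivial gain graph $(G,1_{\udcset})$ is obviously balanced, so it suffices to check that balancedness is preserved under switching. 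Substituting $\varphi^\zeta(e_{i_ji_{j+1}})=\zeta(v_{i_j})^{-1}\varphi(e_{i_ji_{j+1}})\zeta(v_{i_{j+1}})$ into the walk-gain formula (\ref{equ:gain_walk}) for a cycle $C$ with $v_{i_k}=v_{i_1}$, the interior factors telescope and yield $\varphi^\zeta(C)=\zeta(v_{i_1})^{-1}\varphi(C)\zeta(v_{i_1})$; therefore $\varphi^\zeta(C)=1_{\udcset}$ if and only if $\varphi(C)=1_{\udcset}$.

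For the harder direction (i)$\Rightarrow$(iii), I would handle each connected component of $G$ separately. Fix a root $v_0$ and a spanning tree $T$ of the component; set $\theta(v_0)=1_{\udcset}$ and $\theta(v):=\varphi(P_{v_0,v})$, where $P_{v_0,v}$ is the unique tree path from $v_0$ to $v$. For any tree edge $e_{ij}$ the identity $\varphi(e_{ij})=\theta(v_i)^{-1}\theta(v_j)$ is immediate. For a non-tree edge $e_{ij}$, the unique tree path $P_{v_i,v_j}$ together with $e_{ji}$ forms a cycle $C$; balancedness gives $\varphi(P_{v_i,v_j})\varphi(e_{ji})=1_{\udcset}$, so $\varphi(e_{ij})=\varphi(P_{v_i,v_j})$. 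To close the argument I would show that $\theta(v_i)^{-1}\theta(v_j)=\varphi(P_{v_i,v_0})\varphi(P_{v_0,v_j})$ collapses to $\varphi(P_{v_i,v_j})$: the shared portion from $v_0$ down to the least common ancestor of $v_i$ and $v_j$ is traversed once in each direction, so its gain factors cancel, leaving exactly the tree path gain from $v_i$ to $v_j$.

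The one step that needs genuine care is the last collapse in (i)$\Rightarrow$(iii): ensuring that the concatenated walk $P_{v_i,v_0}\cdot P_{v_0,v_j}$ has the same $\varphi$-value as the direct tree path $P_{v_i,v_j}$. In the $\udcset$ setting this follows cleanly because multiplication of dual complex numbers is commutative, so cancellation of a factor $\varphi(e)\varphi(e)^{-1}$ introduced on either side of the product is harmless, and the cycle-decomposition interpretation of a closed walk as a product of cycle gains is unambiguous. The remaining verifications are purely group-theoretic and depend only on the existence of inverses and the edge-reversal convention $\varphi(e_{ij})=\varphi(e_{ji})^{-1}$.
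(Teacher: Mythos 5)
Your proof is correct, and it is essentially the standard Zaslavsky argument that the paper does not reproduce but simply cites from \cite{Za89}: switching by the inverse potential for (iii)$\Rightarrow$(ii), conjugation-invariance of neutrality for (ii)$\Rightarrow$(i), and the rooted spanning-tree construction of a potential for (i)$\Rightarrow$(iii). One small correction: your closing remark overstates the role of commutativity --- the key cancellation $\varphi(P_{w,v_0})\varphi(P_{v_0,w})=1_{\Omega}$ is between \emph{adjacent} mutually inverse factors, so the collapse of $\varphi(P_{v_i,v_0})\varphi(P_{v_0,v_j})$ to $\varphi(P_{v_i,v_j})$ holds in any gain group, which is as it should be since the lemma is stated for an arbitrary $\Omega$ and not only for the commutative group $\udqset$ of unit dual complex numbers.
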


	\subsection{Dual elements}

			Denote  $\epsilon$ as the infinitesimal unit, satisfying $\epsilon\neq 0$ and  $\epsilon^2 = 0$.   The symbol $\epsilon$ is commutative with numbers in $\gset$.
			If $a_s,a_d\in\rset$, then $a = a_s + a_d\epsilon \in \drset$ is a dual   number.
			Similarly, 	if $a_s,a_d\in\cset$, then $a = a_s + a_d\epsilon \in \dcset$ is a dual complex number; if $a_s,a_d\in\qset$, then $a = a_s + a_d\epsilon \in \dqset$ is a dual quaternion number.  Here,  $\drset, \dcset$ and $\dqset$   denote the ring of dual numbers, dual complex numbers and dual quaternions, respectively.
			We use $\dgset$ to represent them in general.   We call an element in $\dgset$ a dual element.
			
			%Thus a dual element means a dual number, or a dual complex number, or a dual quaternion, depending upon $\dgset = \drset$, or $\dcset$, or $\dqset$.  % Similarly, a Hermitian matrix in $\gset$ means a symmetric matrix, or a complex Hermitian matrix, or a quaternion Hermitian matrix, depending upon $\gset = \rset$, or $\cset$, or $\qset$.

			A {\bf dual element} $a = a_s + a_d\epsilon \in \dgset$ has standard part $a_s \in \gset$ and dual part $a_d \in \gset$.    The {\bf conjugate} of $a$ is defined as $a^* = a_s^* + a_d^*\epsilon$, where $a_s^*$ and $a_d^*$ is the conjugates of numbers $a_s$ and $a_d$, respectively.   Note
			that if $a\in\drset$, then $a^*=a$.      If $a_s \not = 0$, then we say that $a$ is {\bf appreciable}.   Otherwise, we say that $a$ is {\bf infinitesimal}.
			The real part of $a$ is defined by $Re(a) = Re(a_s)+Re(a_d)\epsilon$, where $Re(a_s)$ and $Re(a_d)$ are the real parts of the  numbers $a_s$ and $a_d$, respectively.
			The standard part   $a_s$ and the dual part $a_d$  of $a$ are denoted by $St(a)$  and $Du(a)$, respectively.

			Suppose we have two dual elements $a = a_s + a_d\epsilon$ and $b = b_s + b_d\epsilon$.   Then their sum is $a+b = (a_s+b_s) + (a_d+b_d)\epsilon$, and their product is $ab = a_sb_s + (a_sb_d+a_db_s)\epsilon$.
			In this way, $\dgset$ is a ring.   In particular, $\drset$ and $\dcset$ are two commutative rings, while $\dqset$ is a noncommutative ring. 		
			
			If both $a_s$ and $a_d$ are real numbers, then $a = a_s + a_d\epsilon$ is called a dual number.  Suppose we have two dual numbers $a = a_s + a_d\epsilon$ and $b = b_s + b_d\epsilon$.   By \cite{QLY22}, if
			$a_s > b_s$, or $a_s = b_s$ and $a_d > b_d$, then we say $a > b$.   Then this defines positive, nonnegative dual numbers, etc.

	For a dual element $p = p_s + p_d\epsilon\in\dgset$, its squared norm is a nonnegative dual number defined by
	$$|p|^2 = p^*p = p_s^*p_s + (p_s^*p_d+p_d^*p_s)\epsilon,$$
	and its magnitude is defined as a nonnegative dual number as follows
	\begin{equation} \label{e1}
		|p| := \left\{ \begin{aligned} |p_s| + {(p_s^*p_d+p_d^*p_s) \over 2|p_s|}\epsilon, & \ {\rm if}\  p_s \not = 0, \\
			|p_d|\epsilon, &  \ {\rm otherwise}.
		\end{aligned} \right.
	\end{equation}
	The dual element $p$ is invertible if and only if $p_s\neq 0$. Furthermore, if $p$ is invertible, then $p^{-1}=\frac{p^*}{|p|^2}$.
	
	We study unit dual complex and quaternion numbers in a unified frame as  {\bf unit dual elements}.  We denote this group by $\udset$.
		Then
		$$\udset = \{ p \in \dgset | | p | = 1 \}.$$  A unit dual number is either $1$ or $-1$.
		A dual complex number $p = p_s + p_d\epsilon$ is a {\bf unit dual complex number} if and only if $|p_s|=1$ and $p_sp_d^* + p_dp_s^* = 0$.
		% Let $p_s=p_{s0}+p_{s1}i$ and $p_d=p_{d0}+p_{d1}i$. Then $p$ is a unit dual complex number if and only if $p_{s0}^2+p_{s1}^2=1$ and $p_{s0}p_{d0}+p_{s1}p_{d1}=0$.
		We denote this group by $\udcset$.
		Then
		$$\udcset = \{ p \in \dcset | | p | = 1 \}.$$
		A dual quaternion number $p = p_s + p_d\epsilon$ is a {\bf unit dual quaternion number} if and only if $|p_s|=1$ and $p_sp_d^* + p_dp_s^* = 0$.
		%Let $p_s=p_{s0}+p_{s1}i+p_{s2}j+p_{s3}k$ and $p_d=p_{d0}+p_{d1}i+p_{d2}j+p_{d3}k$. Then $p$ is a unit dual quaternion number if and only if $p_{s0}^2+p_{s1}^2+p_{s2}^2+p_{s3}^2=1$ and $p_{s0}p_{d0}+p_{s1}p_{d1}+p_{s2}p_{d2}+p_{s3}p_{d3}=0$.
		We denote this group by $\udqset$.
		Then
		$$\udqset = \{ p \in \dqset | | p | = 1 \}.$$
		A unit dual element is always invertible {and} its inverse is its conjugate.   	The product of two unit dual  elements is still a unit dual element.  Hence, the set of unit dual  elements forms a group by multiplication.

		Given a graph $G$ and a certain  gain function $\varphi: \vec{E}(G)\rightarrow \udset$.  We call $\Phi=(G,\udset, \varphi)$ a {\bf dual unit gain graph}.
		Thus, $\Phi$ may be a dual quaternion unit gain graph, a dual complex unit gain graph, or a signed  graph if $\udset=\udqset, \udcset, \{\pm 1\}$, respectively.

	%				
	%		In this paper, we consider dual complex unit gain graphs as they have better properties comparing with
	%		$\Phi=(G,\dcgroup, \varphi)$, and can be compared with the properties of complex unit gain graphs in the literature.
	%	
	%	{In the literature, there is another different definition of dual complex numbers.    The other kind of dual complex numbers is noncommutative in multiplication, thus more complicated. See \cite{QC23} for more explanations on these two kinds of dual complex numbers.}
	%	
	{\begin{Lem}\label{prelim_dc}
			Let $\dc q_1$ and  $\dc q_2 \in \dgset$ be two dual elements. Then
			\begin{itemize}
				\item[(i)] $Re(\dc q_1)\le |\dc q_1|$ and the equality holds if and only if $\dc q_1$ is a nonnegative dual number.
				\item[(ii)] $Re(\dc q_1) =Re({\dc q^*_1})$ and $Re(\dc q_1\dc q_2) =Re(\dc q_2\dc q_1)$.
				\item[(iii)] $|\dc q_1\dc q_2| = |\dc q_1| |\dc q_2|$.
				\item[(iv)] $|\dc q_1+\dc q_2| \le |\dc q_1| +|\dc q_2|$.
			\end{itemize}
		\end{Lem}
	}

	Two dual elements   $p$ and $q$ are {\bf similar}  if there is an appreciable dual element $u$ such that $p=u^{-1}qu$. We denote $p\sim q$ and denote by $[q]$ the equivalence class containing $q$.
	In Lemma 2.1 of  \cite{Zh97}, it was shown   that any quaternion number is similar to a complex number and the real part and the absolute value of the imaginary part of there two numbers are the same. In the following  proposition, we generalize this result to dual quaternion numbers.
	
	\begin{Prop}\label{prop:dq2dc}
		Suppose $q=q_s+q_d\epsilon\in\dqset$. Then there exists $a\in\dcset$  belonging  to $[q]$. In other words, for any  dual quaternion number $q\in\dqset$, there is  $u\in \udqset$ and $a\in\dcset$ such that $a=u^*qu.$ Furthermore, there is $Re(q)=Re(a)$ and $|Im(q)|=|Im(a)|$.
	\end{Prop}
	\begin{proof}
		Denote $q_s = q_0+ q_{11}i+q_{12}j+q_{13}k$, $\vq_1=[q_{11},q_{12},q_{13}]^\top$, $q_d = q_2+ q_{31}i+q_{32}j+q_{33}k$,  $\vq_3=[q_{31},q_{32},q_{33}]^\top$, and $a=u^*qu$.  Consider the following two cases.

		Suppose that $q_s\in\rset$.  Let  $x=q_{31}+\|\vq_3\|{-q_{33}j+q_{32}k}$, $u_s=\frac{x}{|x|}$,   $u_d=0$.
		Then we can verify that $u_s^*q_du_s=q_2+\|\vq_3\|i$.  Thus,
		\begin{eqnarray*}
			a = u^*qu&=&u_s^*q_su_s + (u_s^*q_su_d+u_d^*q_su_s+u_s^*q_du_s)\epsilon\\
			&=&u_s^*u_s q_s + (u_s^*u_d+u_d^*u_s)q_s\epsilon+u_s^*q_du_s\epsilon\\
			&=&  q_s +  u_s^*q_du_s\epsilon\in\dcset.
		\end{eqnarray*}
		Furthermore, there is $Re(q)=Re(a)=q_0+q_2\epsilon$ and $|Im(q)|=|Im(q_d)|\epsilon =\|\vq_3\|\epsilon= |Im(a)|$.
		
		Suppose that $q_s\in\qset$. Let $x = q_{11}+\|\vq_1\|{-q_{13}j+q_{12}k}$ and $u=u_s+u_d\epsilon\in\udqset$ be a unit dual quaternion number with $u_s=\frac{x}{|x|}$.
		It follows from \cite{Zh97} that $a_s=x^{-1}q_sx =u_s^*q_su_s=q_0+\|\vq_1\|i \in\cset$.
		Thus, $Re(p_s)=Re(a_s)$ and $|Im(p_s)|=|Im(a_s)|$.
		Next, we focus on the dual part of $u^*qu$. Let $u_d=\frac12 u_st$, and $t=t_{11}i+t_{12}j+t_{13}k$ be  an infinitesimal quaternion.  Then $u\in\udqset$ for any  infinitesimal quaternion number $t$. By direct   computation, we have
		\begin{eqnarray*}
			a_d= Du(u^*qu) &=& u_d^*q_su_s + u_s^*q_su_d + u_s^*q_du_s\\
			&=&  u_d^*u_su_s^*q_su_s + u_s^*q_su_su_s^*u_d + u_s^*q_du_s\\
			&=&  u_d^*u_sa_s + a_su_s^*u_d + u_s^*q_du_s\\
			&=&   \frac12(-ta_s + a_s t) + u_s^*q_du_s\\
			&=& \|\vq_1\|(t_{12}k-t_{13}j) +  u_s^*q_du_s,
		\end{eqnarray*}
		where the last inequality follows from the definitions.
		Let $r=u_s^*q_du_s=r_0+r_{11}i+r_{12}j+r_{13}k\in\qset$, $t_{11}=0$, $t_{12}=-r_{13}\|\vq_1\|^{-1}$ and  $t_{13}= r_{12}\|\vq_1\|^{-1}$. Then there is $a_d = Du(u^*qu)=r_0+r_{11}i\in\cset$, where $r_0=q_2$, $r_{11} =  \vq_1^\top\vq_3 \|\vq_1\|^{-1}$.
		Furthermore, there is $|Im(a)|=|\|\vq_1\|+r_{11}\epsilon|=\|\vq_1\|+r_{11}\epsilon$ and $|Im(q)|=|Im(q_s)+Im(q_d)\epsilon|=\|\vq_1\|+\vq_1^\top\vq_3 \|\vq_1\|^{-1}\epsilon$.
		Thus, $|Im(a)|=|Im(q)|$.
		
		This completes the proof. 	
	\end{proof}

	{Let $a=a_s+a_d \epsilon \in\dcset$. Then  the exponential function of $a$ is
		\begin{equation}\label{equ:dualexp}
			e^{a} = e^{a_s} +  a_d e^{a_s}\epsilon.
		\end{equation}
		If $a$ is appreciable, its logarithm function   is
		\begin{equation}\label{equ:duallog}
			\log(a) = \log(a_s)+a_s^{-1}a_d\epsilon.
		\end{equation}
		Let $\theta=\theta_s+\theta_d \epsilon \in\drset$ be a dual angle. Then the cosine function of $\theta$ is
		\begin{equation}\label{equ:dualcos}
			\cos(\theta) = \cos(\theta_s)-\theta_d\sin(\theta_s)\epsilon.
		\end{equation}
		
		\begin{Prop}\label{prop:dcnroot}
			Let $a=a_s+a_d \epsilon \in\dcset$. Then the following results hold.
			
			(i) For any unit dual complex number $a\in\udcset$, there is a dual angle $\theta=\theta_s+\theta_d\epsilon\in \drset$ such that $a=e^{i\theta}$. Here, $\theta=-i\log(a) = -i(\log(a_s)+a_s^*a_d\epsilon)$.
			
			(ii) For any positive integer $n$, there is 	 $a^{\frac1n} = e^{i\frac{\theta+2\pi j}{n}}$ for any $j\in \mathbb Z.$
			
			(iii) $\cos(\theta) = \frac12 \left(e^{i\theta}+e^{-i\theta}\right).$
		\end{Prop}
		\begin{proof}
			(i) Since $|a_s|=1$, there exists $\theta_s=-i\log(a_s)$ such that $a_s=e^{i\theta_s}$. Furthermore, let $\theta_d=-ia_d a_s^*$. We can verify that  $a_d=i\theta_d e^{i\theta_s}$. It follows from   $a_d a_s^*+a_s a_d^*=0$ that  $\theta_d\in\mathbb R$.
			
			(ii)  Let  $z_0=e^{i\frac{\theta}{n}}$. We can verify that $\left(z_0\right)^n=a$. Suppose that $z=cz_0$ and $z^n=a$. Then we can verify that $c^n=1$. Thus, there is $z=e^{i\frac{\theta+2\pi j}{n}}$ for any $j\in\mathbb Z$.
			
			(iii) By direct computation, there is
			\begin{eqnarray*}
				e^{i\theta}+e^{-i\theta} &=&  e^{i\theta_s} +  i\theta_d e^{i\theta_s}\epsilon + e^{-i\theta_s} -  i\theta_d e^{-i\theta_s}\epsilon \\
				&=& 2\cos(\theta_s) + i\theta_d(e^{i\theta_s}-e^{-i\theta_s})\\
				&=& 2\cos(\theta_s) -\theta_d\sin(\theta_s)\\
				&=& 2\cos(\theta).
			\end{eqnarray*}
			
			This completes the proof.
		\end{proof}
	}
	
	An $n$-dimensional {\bf dual    vector} is denoted by $\vx = (x_1,\dots, x_n)^\top$, where $x_1,\dots, x_n\in\dgset$ are dual elements.   We may denote $\vx = \vx_s + \vx_d\epsilon$, where $\vx_s$ and $\vx_d$ are two $n$-dimensional  vectors in $\gset^n$. The $2$-norm of $\vx$ is defined as
	$$\|\vx\|_2 = \left\{\begin{array}{ll}
		\sqrt{\sum_{i=1}^n |x_i|^2}, & \ \mathrm{if} \  \vx_s \not = \0,\\
		\|\vx_d\|_2\epsilon, & \ \mathrm{if} \  \vx_s  = \0.
	\end{array}\right.$$
	If $\vx_s \not = \0$, then we say that $\vx$ is appreciable.   %The unit vectors in ${\mathbb R}^n$ are denoted as $\ve_1,\dots, \ve_n$.   They are also unit vectors of ${\mathbb {DC}}^n$.
	Denote $\vx^* = (x_1^*,\dots, x_n^*)$ as the conjugate of $\vx$.  Let $\vy = (y_1,\dots, y_n)^\top$ be another $n$-dimensional dual  vector.   Define
	$$\vx^*\vy = \sum_{j=1}^n x_j^*y_j.$$
	If $\vx^*\vy = 0$, we say that $\vx$ and $\vy$ are {\bf orthogonal}.  Note that $\vx^*\vx = \|\vx\|_2^2$.
	Let $\vx_1,\dots, \vx_n\in\dgset^n$ be $n$ dual  vectors.   If $\vx_i^*\vx_j = 0$ for $i \not = j$ and $\vx_i^*\vx_j = 1$ for $i = j$, $i, j = 1,\dots, n$, then we say that $\vx_1,\dots, \vx_n$ form an {\bf orthonormal basis} of the $n$-dimensional dual  vector space.   %One may check that the Gram-Schmidt orthonormalization procedure \cite{HJ12} is still valid for dual complex vectors.

	\subsection{Dual   matrices}
	
	Assume
	that $A = A_s + A_d\epsilon$ and $B = B_s + B_d\epsilon$ are two   dual   matrices in $\dgset^{n\times n}$, where $n$ is a positive integer, $A_s, A_d, B_s$ and $B_d$ are four   matrices in $\gset^{n\times n}$.
	%, assume that $\lambda = \lambda_s + \lambda_d\epsilon$ and $\mu = \mu_s + \mu_d \epsilon$ are two dual complex numbers with $\lambda_s, \lambda_d, \mu_s$ and $\mu_d$ being complex numbers, and assume that
	%	$\vx = \vx_s + \vx_d \epsilon$ and $\vy = \vy_s + \vy_d \epsilon$ are two $n$-dimensional dual complex vectors with $\vx_s, \vx_d, \vy_s$ and $\vy_d$ being $n$-dimensional {complex vectors}.
	If $AB = BA = I$, where
	$I$ is the $n \times n$ identity matrix, then we say that $B$ is the {\bf inverse} of $A$ and denote that $B = A^{-1}$.  We have the following proposition.
	
	\begin{Prop} \label{p2.1}
		Suppose that $A = A_s + A_d\epsilon \in \dgset^{n\times n}$ and $B = B_s + B_d\epsilon \in \dgset^{n\times n}$ are two  dual  matrices.   Then the following four statements are equivalent.
		
		(a) $B = A^{-1}$;
		
		(b) $AB = I$;

		(c) $A_sB_s = I$ and $A_sB_d+A_dB_s=O$, where $O$ is the $n \times n$ zero matrix;
		
		%(c) $B_s = A_s^{-1}$ and $A_sB_d+A_dB_s=O$.
		
		(d) $B_s = A_s^{-1}$ and $B_d= -A_s^{-1}A_dA_s^{-1}$.
	\end{Prop}
	
	%	If there is an $n \times n$ invertible dual   matrix $P$ such that $A = P^{-1}BP$, then we say that $A$ and $B$ are {\bf similar}, and denote $A \sim B$.
	
	Given a dual  matrix $A\in\dgset^{n\times n}$, denote its conjugate transpose as $A^*$.   If $A^* = A^{-1}$, then $A$ is called a {\bf dual unitary matrix}.  If a dual number matrix is a dual unitary matrix, then we simply call it a {\bf dual orthogonal matrix}.
	Let $A\in \dgset^{n\times m}$. Then the null space generated by $A$ is
	\[\text{null}(A)= \{{\vx\in\dgset^n}\,|\, A^*{\vx}=0\},\]
	and the span of $A$ is
	\[\text{span}(A)=\{{\vx\in\dgset^n}\,|\, \exists{\vy} \in \dgset^m \text{ such that }{\vx} =A{\vy}\}.\]

	\begin{Prop}\label{Lem:DCspace}
		Let $U = [\vu_1,\dots, \vu_n]\in\dgset^{n\times n}$ be such that its columns form an orthonormal basis in $\dgset^n$. Then ${U^* U}=I$ and  the following results hold.
		
		(i) For  any vector ${\vx}\in  \dgset^n$, there exists   ${\vy}\in  \dgset^n$  such that {$\vx= U \vy$}.
		
		(ii) Suppose that $1<k<n$ and {$U = [U_1, U_2]$}, where ${U}_1\in\dcset^{n\times k}$ and $ {U}_2\in\dgset^{n\times (n-k)}$. Then  $\text{null}({U}_1) = \text{span}({U}_2)$.
	\end{Prop}
	\begin{proof}
		(i) This result  follows directly from  $\vy=U^*\vx$ and $UU^*=I$.
		
		%		{Let $U = U_s + U_d\epsilon\in\dgset^{n\times n}$, $\vx = \vx_s + \vx_d\epsilon\in\dgset^{n}$.} Since  ${U^* U}=I$, there is  $U_s^*U_s=I$ and  the rank of $\begin{bmatrix}
			%			U_s & O\\
			%			U_d & U_s
			%		\end{bmatrix}$ is equal to $2n$. Therefore,
		%		\[\begin{bmatrix}
			%			U_s & O\\
			%			U_d & U_s
			%		\end{bmatrix} \begin{bmatrix}
			%			{\vy}_s\\
			%			{\vy}_d
			%		\end{bmatrix} = \begin{bmatrix}
			%			{\vx}_s\\{\vx}_d
			%		\end{bmatrix}\]
		%		always has   solutions for any ${\vx}_s$ and ${\vx}_d$.
		
		(ii) By  ${U^*U}=I$, {we have} ${U_1^*U_2} = O_{k\times (n-k)}$, the $k \times (n-k)$ zero matrix.
		Let $\vx=U \vy = U_1 \vy_1 + U_2 \vy_2$, {where $\vy_1$ and $\vy_2$ have the same partition as the columns of  $U_1$ and $U_2$, respectively.}
		On one hand, if {$\vx \in \text{null}(U_1)$}, then {$\vy_1=U_1^*\vx= {\mathbf 0}$}. Thus  ${\vx}=U_2\vy_2 \in \text{span}({U}_2)$ and  $\text{null}({U}_1) \subseteq \text{span}({U}_2)$.
		On the other hand, if ${\vx} \in \text{span}({U}_2)$, then there exists $\vy_2\in\dgset^{n-k}$ such that  {$\vx=U_2 \vy_2$}. Thus,  $ {U_1^*\vx= {\mathbf 0}}$ and $\text{span}({U}_2)\subseteq\text{null}({U}_1)$.
		
		This completes the proof.
	\end{proof}

	Assume that $A\in\dgset^{n\times n}$ and $\lambda = \lambda_s + \lambda_d\epsilon\in\dgset$.	If
	\begin{equation} \label{en1}
		A\vx =\vx  \lambda,
	\end{equation}
	where $\vx$ is appreciable, i.e., $\vx_s \not = \0$, then $\lambda$ is called a {\bf right eigenvalue} of $A$, with an eigenvector $\vx$. Similarly, if
	\begin{equation*}
		A\vx = \lambda \vx,
	\end{equation*}
	where $\vx$ is appreciable, i.e., $\vx_s \not = \0$, then $\lambda$ is called a {\bf left eigenvalue} of $A$, with an eigenvector $\vx$.
	{If $\dgset$ is $\drset$ or $\dcset$, then the multiplication is commutative.    In these two cases, it is not {necessary} to distinguish right and left eigenvalues.  We just call them eigenvalues \cite{QC23}.
		
		A Hermitian matrix in $\gset$ means a symmetric matrix, or a complex Hermitian matrix, or a quaternion Hermitian matrix, depending upon $\gset = \rset$, or $\cset$, or $\qset$.
		Similarly,  a dual Hermitian matrix in $\dgset$ means a dual symmetric matrix, or a dual complex Hermitian matrix, or a dual quaternion Hermitian matrix, depending upon $\dgset = \drset$, or $\dcset$, or $\dqset$.  	
		By \cite{QC23}, a non-Hermitian dual number matrix may have no eigenvalue at all, or have
		infinitely many eigenvalues.
		However, a dual quaternion Hermitian matrix  has exactly $n$ dual number right eigenvalues  \cite{QL23}.
		As dual numbers are commutative with dual quaternions, they are also left eigenvalues.  Thus, we may simply call them {\bf eigenvalues} of $A$.  Note that $A$ may still have other left eigenvalues, which are not dual numbers.  See an example of a quaternion matrix in \cite{Zh97}.
		
		Following \cite{QL23}, we  may prove  that if $A\in\dgset^{n\times n}$ is Hermitian, then it has exactly $n$ dual number eigenvalues, with orthonormal
		eigenvectors. Furthermore, $A$ is positive semidefinite (definite) if and only if its eigenvalues are nonnegative (positive).}
	
	If there is an $n \times n$ invertible dual   matrix $P$ such that $A = P^{-1}BP$, then we say that $A$ and $B$ are {\bf similar}, and denote $A \sim B$.
	%By definition, if $A \sim B$, then $A$ and $B$ have the same eigenvalue set.
	We have the following proposition.
	
	\begin{Prop} \label{p2.2}
		Suppose that $A$ and $B$ are two $n \times n$ dual   matrices, $A \sim B$, i.e., $A = P^{-1}BP$ for some $n \times n$ invertible dual  matrix $P$, and $\lambda$ is a dual element eigenvalue of $A$ with a dual element eigenvector $\vx$.   Then $\lambda$ is an eigenvalue of $B$ with an eigenvector $P\vx$.
	\end{Prop}
	
	Suppose that $A = A_s + A_d\epsilon$, $\lambda = \lambda_s + \lambda_d \epsilon$ and $\vx = \vx_s + \vx_d \epsilon$. Then
	(\ref{en1}) is equivalent to
	\begin{equation} \label{en2}
		A_s\vx_s = \lambda_s\vx_s,
	\end{equation}
	with $\vx_s \not = \0$, i.e., $\lambda_s$ is an eigenvalue of $A_s$ with an eigenvector $\vx_s$, and
	\begin{equation} \label{en3}
		(A_s-\lambda_sI)\vx_d - \lambda_d\vx_s = -A_d\vx_s.
	\end{equation}
	{Recently, several numerical methods for computing eigenvalues of dual quaternion Hermitian matrices arose.  These include a power method \cite{CQ23}, a bidiagonalization method \cite{DLWW24} and a Rayleigh quotient iteration method \cite{DWD24}.}

	% % The following proposition can be found in \cite{QC23}.
	%	\red{The following proposition is an extention of that from  \cite{QC23}.}
	%
	%	
	%	\begin{Prop} \label{p2.3}
		%		Suppose that $A$ is an $n \times n$ dual element diagonalizable matrix, i.e., $A$ is similar to a dual element  diagonal matrix.   Then $A$ has exactly $n$ eigenvalues.
		%	\end{Prop}
	%	
	%	An $n \times n$ dual element Hermitian matrix $A$ is always diagonalizable in the sense that $A = P{\Sigma}P^{-1}$, where ${\Sigma}$ is an $n \times n$ dual number diagonal matrix, while $P$ is a dual element unitary matrix.
	%	
	%For more knowledge of dual complex matrices, please see \cite{QACLL22, QC23, QLY22, WDW24}.

	\section{Dual   Unit Gain Graphs}
	
	Let $\Phi=(G,\udset, \varphi)$ be a   dual   unit gain graph. %Define the adjacency and Laplacian matrices
	%via the gain function  {$\varphi(e)$} as follows,
	%\begin{equation}\label{mat:Adj_phi}
	%a_{ij}{(\Phi)} = \left\{
	%\begin{array}{cl}
	%\varphi(e_{ij}),  &  \text{if } (i,j)\in E,\\
	%0, & \text{otherwise},
	%\end{array}
	%\right.  \ \text{ and } L(\Phi) =   {D(\Phi)- A(\Phi)}.
	%\end{equation}
	%Here,  $\varphi(e_{ij}) \in \udcset$,  $\varphi(e_{ij})=\varphi(e_{ji})^{-1}=\overline{\varphi(e_{ji})}$, \blue{$A(\Phi) = (a_{ij}(\Phi))$} and $D(\Phi)$ is an $n\times n$ diagonal matrix with each diagonal element being the degree of the corresponding vertex.
	Denote 	 the adjacency and Laplacian matrices of $\Phi$  defined by \eqref{mat:Adj_phi} as $A$ and $L$, respectively.
	%The following theorem generalizes
	By Lemma~\ref{lem:balance}, we have the following lemma for dual  unit gain graphs.  %We give a complete proof here.
	
	\begin{Lem} \label{Reasonable}
		Let $\Phi=(G,\udset, \varphi)$ be a   dual  unit gain graph. Then the following are equivalent:
		\begin{itemize}
			\item[(i)] $\Phi$ is balanced.
			\item[(ii)] $\Phi\sim(G,1_{\udset})$.
			\item[(iii)] $\varphi$ has a potential function.
		\end{itemize}
	\end{Lem}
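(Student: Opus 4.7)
The plan is simply to specialize Lemma \ref{lem:balance} to the case $\Omega = \udcset$. All that is needed is to confirm that the unit dual complex numbers genuinely constitute a gain group in the sense required, which Section 2 has already done: the set $\udcset$ is closed under multiplication, the real number $1 = 1 + 0\epsilon$ is an identity, multiplication on $\dcset$ is commutative and associative, and every $p \in \udcset$ is invertible with $p^{-1} = p^{*}$ (itself a member of $\udcset$). Once this observation is in place, the three equivalences (i)$\Leftrightarrow$(ii)$\Leftrightarrow$(iii) follow verbatim from Lemma \ref{lem:balance}.

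If I wished to keep the argument self-contained rather than invoke the general statement, I would run the standard cycle of three implications. For (iii)$\Rightarrow$(ii), given a potential $\theta$ satisfying $\varphi(e_{ij}) = \theta(v_i)^{-1}\theta(v_j)$, I would choose the switching function $\zeta(v) = \theta(v)^{-1}$ and verify directly that $\varphi^{\zeta}(e_{ij}) = \zeta(v_i)^{-1}\varphi(e_{ij})\zeta(v_j) = 1$, where commutativity of $\udcset$ is used to collapse the middle product. For (ii)$\Rightarrow$(i), I would note that switching multiplies the gain of any closed walk by the trivial product $\zeta(v)\zeta(v)^{-1} = 1$ at its base vertex, hence preserves balancedness, and $(G,1_{\udcset})$ is trivially balanced since every cycle has gain $1$. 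For (i)$\Rightarrow$(iii), I would pick a spanning tree in each connected component, choose a root $v_0$ in each, set $\theta(v_0) = 1$, and define $\theta(v) = \varphi(P_{v_0 v})$ for the unique tree path $P_{v_0 v}$; balancedness of $\Phi$ then guarantees that equation \eqref{equ:potential} extends consistently to the non-tree edges.

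I do not anticipate any real obstacle: the lemma is, by design, a direct corollary. The single point worth a sentence of justification is that $\udcset$ really is a multiplicative group, which is settled by the well-definedness of inverses recorded in \eqref{e1} and the paragraph immediately following it. Commutativity of $\udcset$ only simplifies the walk-gain formula \eqref{equ:gain_walk} compared to the noncommutative (e.g.\ quaternion) setting, so no additional care is needed.
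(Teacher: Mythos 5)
Your proposal is correct and follows exactly the paper's route: the paper derives this lemma as an immediate specialization of Lemma~\ref{lem:balance} to the gain group $\Omega=\udcset$, relying on the fact (established in Section~2.1) that the unit dual complex numbers form a multiplicative group. Your additional self-contained sketch of the three implications is sound but not needed for the paper's argument.
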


	Let $\Phi=(G,\udset, \varphi)$ be a   dual  unit gain graph with $n$ vertices. Then {the adjacency matrix} $A(\Phi)$ and the Laplacian matrix $L(\Phi)$ are two $n \times n$ dual   Hermitian matrices.   Thus, each of $A(\Phi)$ and $L(\Phi)$ has $n$ eigenvalues.  The set of the $n$ eigenvalues of $A(\Phi)$ is called the {\bf spectrum} of $\Phi$, and denoted as $\sigma_A(\Phi)$, while		the set of the $n$ eigenvalues of $L(\Phi)$ is called the {\bf Laplacian spectrum} of $\Phi$, and denoted as $\sigma_L(\Phi)$.
	{We also denote the eigenvalue {set}s of the adjacency and Laplacian matrices of the underlying graph $G$ as $\sigma_A(G)$ and $\sigma_L(G)$, respectively.}
	We have the following theorem.
	
	\begin{Thm} \label{spectrum}
		Let $\Phi=(G,\udset, \varphi)$ be a   dual   unit gain graph with $n$ vertices.   If $\Phi$ is balanced, then   $\sigma_A(\Phi)=\sigma_A(G)$ and $\sigma_L(\Phi)=\sigma_L(G)$.
		Furthermore, the  spectrum
		$\sigma_A(\Phi)$ consists of $n$ real numbers, while the Laplacian spectrum $\sigma_L(\Phi)$ consists of one zero and $n-1$ positive numbers.
	\end{Thm}

	\begin{proof}
		{By Lemma \ref{Reasonable}, we have} $\Phi\sim \Phi^1:= (G,1_{\udset})$. 	Then  $\Phi^1$ is exactly the {underlying} graph $G$, and the adjacency and the Laplacian matrices of $\Phi^1$ {are the adjacency and  Laplacian matrices of $G$}, respectively.  It follows from $\Phi\sim \Phi^1$ that there exists a function $\zeta$ such that  for all $e_{ij}\in {E(\Phi)}$, there is
		\[\zeta^{-1}(v_i)\varphi(e_{ij})\zeta(v_j)=1.\]
		Thus, we have
		\[A(\Phi^1) = Z^{-1}A(\Phi)Z, \text{   where  }Z=\text{diag}(\zeta(v_1),\dots,\zeta(v_n)),\]
		and $L(\Phi^1) = {D(\Phi^1)}-A(\Phi^1)  = Z^{-1}({D(\Phi)}-A(\Phi))Z =Z^{-1}L(\Phi)Z$.
		In other words,  $A(\Phi^1)$ and $L(\Phi^1)$ are similar with $A(\Phi)$ and $L(\Phi)$, respectively.   {Now, the conclusions of this theorem follow from the spectral properties of adjacency and Laplacian matrices of ordinary graphs.}
	\end{proof}

	If $G$ is a tree, then $\Phi$ is balanced, and the eigenvalues of the adjacency and the Laplacian matrices of $\Phi$ are the same with {those} of the adjacency and  Laplacian matrices of $G$, respectively. For instance, a path is a special tree. Let $P_n$ be the path   on $n$ vertices and $\Phi=(P_n,\udcset,\varphi$).
	Then the eigenvalues of $A(\Phi)$ and $L(\Phi)$ can be calculated as
	\[\sigma_A(\Phi) =  \left\{2\cos\left(\frac{\pi j}{n+1}\right):\ j\in\{1,\dots,n\}\right\},\]
	and
	\[\sigma_L(\Phi) =  \left\{2-2\cos\left(\frac{\pi j}{n}\right):\ j\in\{0,\dots,n-1\}\right\},\]
	respectively.

	If $\Phi$ is not balanced, then the eigenvalues of $A(\Phi)$ may not be real numbers.  The following example illustrates this.

	\begin{figure}\label{ex:dcugg}
		\begin{subfigure}[b]{0.3\textwidth}
			\centering
			\includegraphics[width=0.85\linewidth]{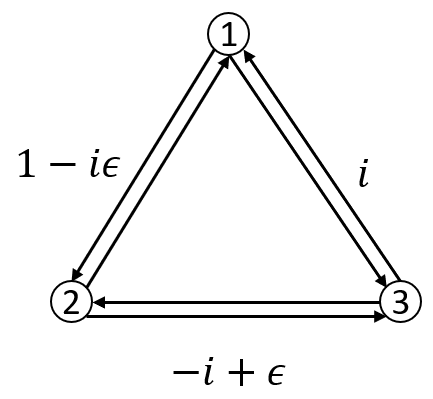}
			\caption{$\udcset$-gain graph $\Phi_1$}
			\label{fig:balanced}
		\end{subfigure}
		\hfill
		\begin{subfigure}[b]{0.3\textwidth}
			\centering
			\includegraphics[width=0.85\linewidth]{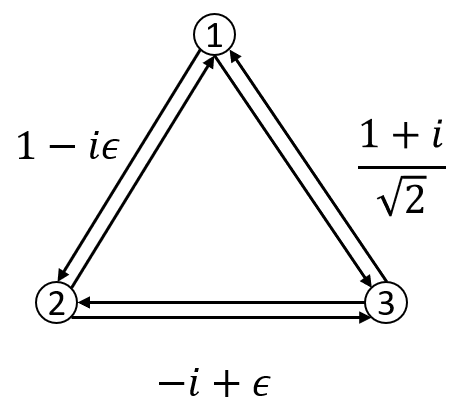}
			\caption{$\udcset$-gain graph $\Phi_2$}
			\label{fig:balanced1}
		\end{subfigure}
		\hfill
		\begin{subfigure}[b]{0.3\textwidth}
			\centering
			\includegraphics[width=0.85\linewidth]{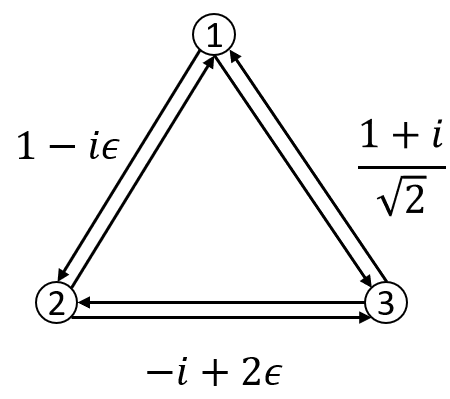}
			\caption{$\udcset$-gain graph $\Phi_3$}
			\label{fig:balanced1}
		\end{subfigure}
		\caption{Three $\udcset$-gain graphs}
	\end{figure}
	
	\begin{example}
		Consider {three} $\udcset$-gain  cycles in Figure 1.
		Their adjacency matrices are given as follows.
		\[A(\Phi_1) = \begin{bmatrix}
			0 & 1-i\epsilon & -i \\
			1+i\epsilon & 0 & -i+\epsilon \\
			i & i+\epsilon & 0
		\end{bmatrix},
		\quad
		A(\Phi_2) = \begin{bmatrix}
			0 & 1-i\epsilon & {\frac{1-i}{\sqrt{2}}}\\
			1+i\epsilon & 0 & -i+\epsilon \\
			\frac{1+i}{\sqrt{2}}   & i+\epsilon & 0
		\end{bmatrix},
		\]
		and
		\[A(\Phi_3) = \begin{bmatrix}
			0 & 1-i\epsilon & {\frac{1-i}{\sqrt{2}}}\\
			1+i\epsilon & 0 & -i+2\epsilon \\
			\frac{1+i}{\sqrt{2}}   & i+2\epsilon & 0
		\end{bmatrix}, \]
		respectively.
		%By direct {computation}, we get the eigenvalues  of $A(\Phi_1)$ {as}
		The $\udcset$-gain graph $\Phi_1$ is balanced, and the eigenvalues of its adjacency {matrix} are
		\[\sigma_A(\Phi_1) = \{2,\quad  -1,\quad -1\},\]
		which are the same with the eigenvalues of the adjacency {matrix} of its underlying graph.
		{Both the $\udcset$-gain graphs $\Phi_2$ and  $\Phi_3$ are unbalanced.} The eigenvalues  of $A(\Phi_2)$ are all real numbers, i.e.,
		\[\sigma_A(\Phi_2) = \{1.9319,\quad -0.5176,\quad -1.4142\},\]
		while the eigenvalues  of $A(\Phi_3)$ 	are all dual numbers, i.e.,
		\[\sigma_A(\Phi_3) =\{1.9319+{0.1725}\epsilon,\quad -0.5176{-0.6440}\epsilon,\quad -1.4142{+0.4714}\epsilon\}.\]
		The standard parts of $\sigma_A(\Phi_3)$ are the same with  $\sigma_A(\Phi_2)$.
	\end{example}
	
	The closed form of the adjacency and Laplacian eigenvalues of a cycle $\ucset$-gain graph is presented in Theorem 5.1 in \cite{Re12}.  Let $C_n$ be the cycle  on $n$ vertices   and $\Phi=(C_n,\ucset,\varphi$) and  $\varphi(C_n)=e^{i\theta}\in \ucset$.
	Then  the eigenvalues of $A(\Phi)$ and $L(\Phi)$ can be calculated as
	\begin{equation}\label{equ:adj_eig_cycle}
		\sigma_A(\Phi) =  \left\{2\cos\left(\frac{\theta+2\pi j}{n}\right):\ j\in\{0,\dots,n-1\}\right\},
	\end{equation}
	and
	\begin{equation}\label{equ:Lap_eig_cycle}
		\sigma_L(\Phi) =  \left\{2-2\cos\left(\frac{\theta+2\pi j}{n}\right):\ j\in\{0,\dots,n-1\}\right\},\end{equation}
	respectively.
	{Very recently, \cite{BBCRS22} studied the  unit quaternion cycles  $\Phi=(C_n, \uqset,\varphi$) with gain $q =\varphi(C_n) \in \uqset$. They showed that $\Phi$ is similar with a unit complex cycle  $\Phi^{\zeta}$.  Furthermore, denote $Re(q)+|Im(q)|i=e^{i\theta}$. Then  \eqref{equ:adj_eig_cycle} and \eqref{equ:Lap_eig_cycle}  are also eigenvalues of $A(\Phi)$ and $L(\Phi)$ of     unit quaternion cycles, respectively.
		By using the dual element version of    exponential,   logarithm, and    cosine functions,  we extend this result to dual unit complex  and dual quaternion gain cycles.}

	\begin{Thm}
		Let $C_n:=v_1e_{12}v_2\cdots v_ne_{n1}v_1$ be the cycle   on $n$ vertices   and $\Phi=(C_n,\udset,\varphi$). Suppose $q = \varphi(C_n)\in\udset$.  Then the following results hold.
		
		(i) There exists a switching function $\zeta:V(\Phi)\rightarrow \udset$ such that
		% all gain function values of $\Phi^{\zeta}$ are all 1  except one is $\varphi(C_n)$.
		$\varphi^{\zeta}(e_{i,i+1}) =1$ for $i=1,\dots,n-1$ and $\varphi^{\zeta}(e_{n,1})=q$.
		
		{(ii)  Suppose $\udset=\udcset$.  Then there exists    a dual angle $\theta=\log(q) \in\drset$ such that the eigenvalues of $A(\Phi)$ and $L(\Phi)$ can be calculated by
			\eqref{equ:adj_eig_cycle} and \eqref{equ:Lap_eig_cycle},
			respectively. Here,  the dual logarithm   and the dual cosine functions are defined by \eqref{equ:duallog} and \eqref{equ:dualcos}, respectively.

			(iii)   Suppose $\udset=\udqset$.  Then there exists a dual complex number $a\in\udcset$ and another switching function $\zeta_1:V(\Phi)\rightarrow \udset$ such that
			$\varphi^{\zeta_1}(e_{i,i+1}) =1$ for $i=1,\dots,n-1$ and $\varphi^{\zeta_1}(e_{n,1})=a$.
			Let  $\theta=\log(a) \in\drset$. Then   the eigenvalues of $A(\Phi)$ and $L(\Phi)$ can be calculated by  \eqref{equ:adj_eig_cycle} and \eqref{equ:Lap_eig_cycle}, 		respectively.
		}
		
		%				(ii) If $q_d=0$, then   the eigenvalues of $A(\Phi)$ and $L(\Phi)$ can be calculated by
		%				equations \eqref{equ:adj_eig_cycle} and \eqref{equ:Lap_eig_cycle},
		%				respectively.
		%				
		%				(iii) If $q_d\neq0$, then
		%				\[St(\sigma_A(\Phi)) =  \left\{2\cos\left(\frac{\theta+2\pi j}{n}\right):\ j\in\{0,\dots,n-1\}\right\},\]
		%				and
		%				\[St(\sigma_L(\Phi)) =  \left\{2-2\cos\left(\frac{\theta+2\pi j}{n}\right):\ j\in\{0,\dots,n-1\}\right\}.\]
	\end{Thm}
	
	\begin{proof}
		(i) Define the switching function $\zeta:V(\Phi)\rightarrow \udset$ as follows:
		\[\zeta(v_i)=\left\{\begin{array}{ll}
			1, & i=1,\\
			\left(\Pi_{j=1}^{i-1} \varphi(e_{j,j+1})\right)^{-1},& i=2,\dots,n.
		\end{array}\right.\]
		Then  there is
		\[\varphi^{\zeta}(e_{12}) = \zeta(v_1)^{-1}\varphi(e_{12})\zeta(v_2) =\varphi(e_{12})\varphi(e_{12})^{-1}=1. \]
		For $i=2,\dots,n-1$, there is
		\begin{eqnarray*}
			\varphi^{\zeta}(e_{i,i+1}) &=& \zeta(v_i)^{-1}\varphi(e_{i,i+1})\zeta(v_{i+1})\\
			&=&\Pi_{j=1}^{i-1} \varphi(e_{j,j+1})\varphi(e_{i,i+1})\left(\Pi_{j=1}^{i} \varphi(e_{j,j+1})\right)^{-1}\\
			&=&1.
		\end{eqnarray*}
		For $i=n$, there is
		\begin{eqnarray*}
			\varphi^{\zeta}(e_{n,1}) &=& \zeta(v_n)^{-1}\varphi(e_{n,1})\zeta(v_{1})
			=\Pi_{j=1}^{n-1} \varphi(e_{j,j+1})\varphi(e_{n,1})=\varphi(C_n).
		\end{eqnarray*}
		
		{(ii) The proof is a slight modification of   Theorem 6.1 in \cite{Re12}. Let
			\[P = \begin{pmatrix}
				0& 1 & 0 &     \cdots & 0  \\
				0 &0& 1 &  \cdots & 0\\
				\vdots & \vdots  & \ddots & \ddots & 0\\
				0 & 0 & 0 & \cdots & 1\\
				p&   0 & 0 & \cdots & 0
			\end{pmatrix},\]
			and  $\vx=(x_i)\in\dcset^n$ be an eigenvector of $P$ corresponding to the eigenvalue $\lambda$. Then there is $x_{i+1}=\lambda x_i$ for $i=1,\dots,n-1$ and $px_1 = \lambda x_n$. Thus, there is $(p-\lambda^n)x_1=0$.
			$x_1$ must be an appreciable dual number since $\vx$ is appreciable and  $x_i=\lambda^{i-1}x_1$ for $i=2,\dots,n$.
			Thus, $p=\lambda^n$.  It follows from Proposition \ref{prop:dcnroot} that $\lambda=e^{i\frac{\theta+2\pi j}{n}}$ for $j=0,\dots,n-1$.
			
			Furthermore, we have $PP^*=I$ and   $A=P+P^*=P+P^{-1}$.
			Thus, the eigenvalue  of $A$ equal to $\lambda+\lambda^{-1}$.
			By Proposition~\ref{prop:dcnroot}, there is $\lambda+\lambda^{-1}=2\cos\left(\frac{\theta+2\pi j}{n}\right)$.
			This derives the closed form of $\sigma_A(\Phi)$.
			The closed form of $\sigma_L(\Phi)$ follows directly from  $L(\Phi)=2-A(\Phi)$.
			
			(iii) By Proposition~\ref{prop:dq2dc}, there exists $u\in\udqset$ such that $a=u^*qu\in\udcset$.
			Let $\zeta_1(v_i)=\zeta(v_i)u$ for $i=1,\dots,n$. Then there is $\zeta_1:V(\Phi)\rightarrow \udset$ such that
			$\varphi^{\zeta_1}(e_{i,i+1}) =1$ for $i=1,\dots,n-1$ and $\varphi^{\zeta_1}(e_{n,1})=a$.
			The result of this item follows directly from that of item (ii).
		}		
		
		This completes the proof.
	\end{proof}

	\section{Eigenvalues of Adjacency Matrices}
	
	%{Let  $\Phi=(G,\varphi)$ {be a} $\udcset$-gain graph with $n$ vertices. Then {the adjacency matrix} $A(\Phi)$ is an $n \times n$ dual complex Hermitian matrix.   Thus, $A(\Phi)$ has $n$ eigenvalues.  The set of these $n$ eigenvalues is called the {\bf spectrum} of $A(\Phi)$, and denoted as $\sigma_A(\Phi)$.    }
	
	The switching class has a unique adjacency spectrum.

	\begin{Prop}\label{lem:similar}
		Let $\Phi_1=(G,\varphi_1)$ and $\Phi_2=(G,\varphi_2)$ {be both} $\udset$-gain graphs. If $\Phi_1\sim \Phi_2$, then {$\Phi_1$ and $\Phi_2$} have the same {spectrum}. That is, $\sigma_A(\Phi_1) = \sigma_A(\Phi_2)$.
	\end{Prop}

	Let {$\Phi=(G,\varphi)$} be a $\udset$-gain graph, and  $S$ be a subset of $V$.
	Denote $\Phi[S]$ as the induced subgraph of $\Phi$ with vertex set $S$, and $\Phi-S$ as $\Phi[V\setminus S]$, respectively.
	Both the adjacency matrices  $A(\Phi)$ and  $A(\Phi[S])$ are Hermitian matrices.
	{As stated in Section 2,}  an {$n\times n$}  dual Hermitian  matrix has  exactly $n$ eigenvalues, which are dual numbers.
	The following theorem shows the {eigenvalues} of $A(\Phi)$ interlace with {those} of  $A(\Phi[S])$.
	
	\begin{Thm}(Interlacing Theorem) \label{Thm:interlacing_adj} Let $\Phi=(G,\varphi)$ be a $\udset$-gain graph with $n$ vertices and $S$ be a subset of $V$ with $k$ vertices. %Suppose   that $\Phi\sim (G, 1_{\udcset})$.
		Denote the eigenvalues of $A(\Phi)$ and  $A(\Phi[S])$ by
		\[{\dc \lambda_1\ge \dc  \lambda_2\cdots \ge \dc \lambda_n  \quad \text{and} \quad \dc  \mu_1\ge \dc \mu_2\ge \cdots \ge \dc  \mu_k,}\]
		respectively.
		Then the following inequalities hold:
		\begin{equation}\label{equ:interlacing}
			\dc \lambda_i\ge \dc  \mu_i\ge \dc \lambda_{n+i-k},\quad \forall\, 1\le i\le k.
		\end{equation}
	\end{Thm}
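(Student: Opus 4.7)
The plan is to adapt the classical Courant--Fischer min--max argument to the dual complex Hermitian setting and then apply it to the principal submatrix $A(\Phi[S])$. Since $A(\Phi)$ is an $n\times n$ dual complex Hermitian matrix, by the spectral theorem stated just before Proposition~\ref{p2.3} it admits $n$ orthonormal eigenvectors $\vu_1,\dots,\vu_n\in\dcset^n$ with dual number eigenvalues $\dc\lambda_1\ge\cdots\ge\dc\lambda_n$. For any appreciable $\vx\in\dcset^n$ with expansion $\vx=\sum_j c_j\vu_j$, a direct computation gives the Rayleigh bound
\[
\dc\lambda_n\;\le\;\frac{\vx^*A(\Phi)\vx}{\vx^*\vx}\;\le\;\dc\lambda_1,
\]
with equality at the extremal eigenvectors. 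Because the ordering on dual numbers is total (Section~2), the usual argument carries through verbatim.

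Next, I would establish the dual complex Courant--Fischer characterization: for $1\le i\le n$,
\[
\dc\lambda_i=\max_{\dim U=i}\ \min_{\substack{\vx\in U\\ \vx\text{ appreciable}}}\frac{\vx^*A(\Phi)\vx}{\vx^*\vx}=\min_{\dim W=n-i+1}\ \max_{\substack{\vx\in W\\ \vx\text{ appreciable}}}\frac{\vx^*A(\Phi)\vx}{\vx^*\vx},
\]
where a subspace of dimension $j$ is understood as the span of $j$ orthonormal vectors. The maximum is attained on $\mathrm{span}\{\vu_1,\dots,\vu_i\}$. The key ingredient is a dimension-counting lemma: any $i$-dimensional subspace $U$ meets the $(n-i+1)$-dimensional complement $\mathrm{span}\{\vu_i,\dots,\vu_n\}$ in an appreciable vector. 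This follows by lifting to the $2n\times 2n$ complex block system used in the proof of Proposition~\ref{Lem:DCspace}: the standard parts alone already give an $i+(n-i+1)-n=1$ dimensional intersection over $\C$, yielding an appreciable $\vx$, and then the Rayleigh bound forces $\vx^*A(\Phi)\vx/\vx^*\vx\le\dc\lambda_i$ on $U$.

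With the min--max formula in hand, the interlacing follows exactly as in the classical proof. After relabelling vertices, assume $S=\{v_1,\dots,v_k\}$ so that $A(\Phi[S])$ is the leading $k\times k$ principal submatrix of $A(\Phi)$. Embed $\dcset^k\hookrightarrow\dcset^n$ by $\vx\mapsto\tilde\vx=(\vx^\top,\mathbf{0}^\top)^\top$; then $\tilde\vx^*A(\Phi)\tilde\vx=\vx^*A(\Phi[S])\vx$ and $\tilde\vx^*\tilde\vx=\vx^*\vx$, and the embedding preserves appreciability, orthonormality, and dimension. For the upper bound $\dc\lambda_i\ge\dc\mu_i$, apply the max--min formula in $\dcset^k$ with an optimal $i$-dimensional $U$ and observe $\tilde U$ is an $i$-dimensional candidate in $\dcset^n$. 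For the lower bound $\dc\mu_i\ge\dc\lambda_{n+i-k}$, use the min--max formula: an optimal $(k-i+1)$-dimensional $W\subseteq\dcset^k$ embeds into a $(k-i+1)$-dimensional $\tilde W\subseteq\dcset^n$, and $k-i+1=n-(n+i-k)+1$, matching the codimension needed to bound $\dc\lambda_{n+i-k}$.

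The main obstacle is the rigorous handling of subspaces, dimension and appreciability over $\dcset$, since a dual complex vector need not be appreciable and $\vx^*\vx$ is invertible only in that case. I would resolve this by always choosing bases to be orthonormal (hence appreciable) via Proposition~\ref{Lem:DCspace}, and by reducing dimension-count arguments to the companion $2n\times 2n$ complex system, where ordinary linear algebra applies and produces an appreciable witness vector. Once this technical layer is in place, every other step of the proof mirrors the classical Cauchy interlacing argument for complex Hermitian matrices.
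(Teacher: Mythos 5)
Your proposal is correct and follows essentially the same route as the paper: a Rayleigh-quotient/min--max characterization of the eigenvalues of a dual complex Hermitian matrix, combined with a dimension count on the companion $2n\times(2n+2)$ complex system to produce a vector in the intersection of the two relevant subspaces. The only notable difference is that the paper simply cites the minimax principle of \cite{LQY23} and works directly with the spans of the eigenvectors of $A(\Phi)$ and $A(\Phi[S])$, whereas you propose to re-derive a full Courant--Fischer characterization over arbitrary orthonormally-spanned subspaces of $\dcset^n$, which is more machinery than the interlacing argument actually requires.
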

	
	\begin{proof}
		Suppose {some basis} eigenvectors of  $A(\Phi)$ and  $A(\Phi[S])$ are $\{\vdc x_1,\dots,\vdc x_n\}$ and  $\{\vdc y_1,\dots,\vdc y_k\}$, respectively. Without loss of generality,  assume
		\[A(\Phi) =  \left[\begin{array}{cc}
			A(\Phi[S]) & \dc X^* \\ \dc X & \dc Z
		\end{array}\right].\]
		{For $1 \le i \le k$, d}efine the following vector spaces
		\[\dc U=\text{span}(\vdc x_i,\dots,\vdc x_n), \  \dc V=\text{span}(\vdc y_1,\dots,\vdc y_i),\  \dc W=\left\{\left(\begin{array}{c}
			{\vy}\\ 0
		\end{array}\right)\in\dcset^n,  {\vy} \in \dc V\right\}.\]
		{By Theorem 4.4 in \cite{LQY23} and {Proposition}~\ref{Lem:DCspace}~(ii), {we have}
			\[\dc \lambda_i=\max_{\vdc x\in \dc U} \ \|\vdc x\|^{-2}(\vdc x^*A(\Phi) \vdc x)\quad \text{and}\quad \dc \mu_i=\min_{\vdc x\in \dc V} \ \|\vdc x\|^{-2}(\vdc x^*A(\Phi[S]) \vdc x).\]
			%Since $\text{dim}(\dc U)=n-i+1$ and $\text{dim}(\dc W) = \text{dim}(\dc V)=i$, there is $\dc U \cap \dc W \neq \emptyset$.
			%Suppose that $\dc v$ is a nonzero element in $\dc V$ and $\vdc w=\begin{pmatrix} \dc v\\ 0 \end{pmatrix}\in \dc W$.
			
			Furthermore,  the following system
			\[\begin{bmatrix}
				U_s& O & -W_s & O \\
				U_s & U_d & -W_s & -W_d
			\end{bmatrix}
			\begin{bmatrix}
				{\vv}_s \\  {\vv}_d \\ {\vz}_s \\ {\vz}_d
			\end{bmatrix} =
			\begin{bmatrix}
				O\\O
			\end{bmatrix}
			\]
			always has solutions since  the size of its coefficient matrix  is {$2n\times (2n+2)$.}  {Let $\vv = \vv_s + \vv_d\epsilon$ and $\vz = \vz_s + \vz_d\epsilon$. Then,}
			there exists $\vdc w = \dc U{\vv} = \dc W {\vz} \in \dc U \cap \dc W$.
			Suppose that $\vdc w=\begin{pmatrix} {\vy}\\ 0 \end{pmatrix}$.
			Then we derive that
			\[\dc \lambda_i \ge \|\vdc w\|^{-2}(\vdc w^*A(\Phi)  \vdc w) = \|{\vy}\|^{-2}\left({\vy}^*A(\Phi[S])  {\vy}\right)\ge \mu_i.\]
			This proves the first part in \eqref{equ:interlacing}.
			
			The second inequality in \eqref{equ:interlacing} can be derived by choosing
			\[\dc U=\text{span}(\vdc x_1,\dots,\vdc x_{n+i-k}), \ \dc V=\text{span}(\vdc y_i,\dots,\vdc y_k),\  \dc W=\left\{\left(\begin{array}{c}
				\dc v\\ 0
			\end{array}\right)\in\dgset^n, \dc v\in \dc V\right\},\]
			and we do not repeat the details here.}
	\end{proof}	
	
	%			\begin{proof}
		%				The results follow directly from {Proposition}~\ref{lem:similar} and the interlacing theorem of real matrices, and we omit the details here.
		%			\end{proof}

	\begin{Cor}
		Let $\Phi=(G,\varphi)$ be a $\udset$-gain graph with $n$ vertices. For any vertex $v\in V$, the {eigenvalues} of $A(\Phi)$ and of $A(\Phi-v)$ are labeled in the decreasing order interlace as follows.
		\[\lambda_1(\Phi) \ge \lambda_1(\Phi-v) \ge \lambda_2(\Phi) \ge \lambda_2(\Phi-v)\ge\cdots \ge \lambda_{n-1}(\Phi-v) \ge \lambda_n(\Phi).\]
	\end{Cor}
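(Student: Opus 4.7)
The proposed plan is to derive this corollary as a direct application of the Interlacing Theorem (Theorem~\ref{Thm:interlacing_adj}) by specializing the vertex subset $S$ to exclude exactly one vertex. First, I would take $S = V\setminus\{v\}$, so that the induced subgraph $\Phi[S]$ is precisely $\Phi-v$, and the cardinality is $k = n-1$. This specialization is legitimate since $S$ is any subset of $V$ in the statement of the interlacing theorem.

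Next, I would substitute $k = n-1$ into the inequality \eqref{equ:interlacing} of Theorem~\ref{Thm:interlacing_adj}. For each index $i$ with $1 \le i \le n-1$, the inequality reads
\[
\dc\lambda_i(\Phi) \;\ge\; \dc\mu_i(\Phi-v) \;\ge\; \dc\lambda_{n+i-(n-1)}(\Phi) \;=\; \dc\lambda_{i+1}(\Phi),
\]
where I write $\mu_i(\Phi-v)$ for $\lambda_i(\Phi-v)$ to keep the notation aligned with the theorem's statement.

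Finally, I would chain these $n-1$ inequalities together. Taking $i=1,2,\ldots,n-1$ in order produces
\[
\lambda_1(\Phi) \ge \lambda_1(\Phi-v) \ge \lambda_2(\Phi) \ge \lambda_2(\Phi-v) \ge \cdots \ge \lambda_{n-1}(\Phi-v) \ge \lambda_n(\Phi),
\]
which is exactly the desired interlacing. No separate argument is required for the relevant orderings of dual numbers, since they are already guaranteed by the dual-number ordering used in Theorem~\ref{Thm:interlacing_adj}.

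There is essentially no obstacle here: the proof is a one-line specialization of the previously established interlacing theorem, and the only task is the index bookkeeping. Consequently I would expect the full written proof to occupy at most a few lines.
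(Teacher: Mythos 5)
Your proposal is correct and matches the paper's intent exactly: the corollary is the immediate specialization of Theorem~\ref{Thm:interlacing_adj} to $S=V\setminus\{v\}$, $k=n-1$, where the inequalities $\dc\lambda_i\ge\dc\mu_i\ge\dc\lambda_{i+1}$ chain into the stated interlacing. The index bookkeeping is right, and no further argument is needed.
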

	
	The following proposition presents some results for the eigenvalues and eigenvectors of the adjacency matrix of a  dual unit  gain graph.
	
	\begin{Prop}\label{lem:eig_adjac}
		Let $\Phi=(G,\varphi)$ be a $\udset$-gain graph with $n$ vertices and $\dc A=A(\Phi)$ be the adjacency matrix.
		Suppose $\dc \lambda=\lambda_s+\lambda_d \epsilon$ is an eigenvalue of $\dc A$ and $\vdc x={\vx}_s+{\vx}_d \epsilon= (\dc x_{1},\dots,\dc x_{n})^\top$ is its corresponding unit eigenvector. Then the following results hold.
		\begin{itemize}
			\item[(i)] The eigenvalue satisfies
			$$\dc \lambda=\vdc x^*\dcm A \vdc x=\sum_{e_{ij}\in E(\Phi)} 2Re({\dc x_i^*} \varphi(e_{ij})\dc x_j). $$
			\item[(ii)]  $\lambda_s$ is an  eigenvalue of  the matrix  $A_s$ with an  eigenvector ${\vx}_s$.
			\item[(iii)] The dual part satisfies $\lambda_d={\vx}_s^*A_d {\vx}_s$. Furthermore, if $A_s$ has $n$ simple {eigenvalues} $\lambda_{s,i}$ with associated unit {eigenvectors} $x_{s,i}$'s. Then $\dc A$ has exactly $n$ eigenvalues $\dc \lambda_i=\lambda_{s,i} +\lambda_{d,i}\epsilon$ with associated eigenvectors ${\vx}_i={\vx}_{s,i} +{\vx}_{d,i}\epsilon$, $i=1,\dots,n$, where
			\[\lambda_d={\vx}_s^*A_d {\vx}_s \ \text{and} \ {\vx}_{d,i} = \sum_{j\neq i} \frac{{\vx}_{s,j} {\vx}_{s,j}^* (A_d-\lambda_{d,i} I_n){\vx}_{s,i}}{\lambda_{s,i}-\lambda_{s,j}}. \]
			
			\item[(iv)] Two eigenvectors of $\dcm A$, associated with 	two eigenvalues with distinct standard parts, are orthogonal to each other.
		\end{itemize}
	\end{Prop}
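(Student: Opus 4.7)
The plan is to prove the four parts of Proposition \ref{lem:eig_adjac} by reducing each to manipulations of the decomposed equations \eqref{en2} and \eqref{en3}, together with the edgewise structure of $A(\Phi)$ and properties of the dual-number arithmetic from Lemma \ref{prelim_dc}.

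For part (i), I would left-multiply $\dc A\vdc x = \dc\lambda \vdc x$ by $\vdc x^*$. Since $\vdc x$ is a unit vector, $\vdc x^*\vdc x=1$, so $\dc\lambda = \vdc x^* \dc A\vdc x$. Expanding as a double sum and grouping the $(i,j)$ and $(j,i)$ contributions pairwise, each off-diagonal pair reads $\dc x_i^* \varphi(e_{ij})\dc x_j + \dc x_j^*\varphi(e_{ji})\dc x_i$. Because $\varphi(e_{ji}) = \varphi(e_{ij})^*$, the second summand is the conjugate of the first, so by Lemma \ref{prelim_dc}(ii) their sum equals $2\,Re(\dc x_i^*\varphi(e_{ij})\dc x_j)$. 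The diagonal entries of $\dc A$ vanish, yielding the formula.

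For part (ii), I would simply point to the standard-part equation \eqref{en2}: $A_s\vx_s=\lambda_s\vx_s$ with $\vx_s\neq 0$ is exactly the eigenvalue condition for $A_s$. For part (iii), I would left-multiply \eqref{en3} by $\vx_s^*$. The term $\vx_s^*(A_s-\lambda_sI)\vx_d$ vanishes because $A_s$ is Hermitian and $\lambda_s$ is real, so $\vx_s^*(A_s-\lambda_sI)=0$. Using $\|\vx_s\|_2=1$ (which is implied by $\vdc x$ being a unit dual complex vector with appreciable standard part), this leaves $\lambda_d = \vx_s^* A_d \vx_s$. For the ``furthermore'' statement, with $A_s$ having simple eigenvalues $\lambda_{s,1},\ldots,\lambda_{s,n}$ and orthonormal eigenvectors $\vx_{s,1},\ldots,\vx_{s,n}$, I would expand $\vx_{d,i}$ in this basis as $\vx_{d,i}=\sum_j c_j \vx_{s,j}$. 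Substituting into \eqref{en3} with $\lambda_s=\lambda_{s,i}$, $\vx_s=\vx_{s,i}$ and projecting onto $\vx_{s,j}$ for $j\neq i$ gives $c_j(\lambda_{s,j}-\lambda_{s,i}) = \vx_{s,j}^*(\lambda_{d,i}I-A_d)\vx_{s,i}$. Solving and using $\vx_{s,j}\vx_{s,j}^*$ to rewrite each term produces the stated sum; the component $c_i$ along $\vx_{s,i}$ is a free gauge that may be taken to be zero (or chosen to normalise $\vdc x_i$), and the resulting $\vdc x_i=\vx_{s,i}+\vx_{d,i}\epsilon$ is appreciable, so we indeed obtain $n$ distinct dual complex eigenpairs.

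For part (iv), suppose $\dc A\vdc x = \dc\lambda\vdc x$ and $\dc A\vdc y = \dc\mu\vdc y$ with $\lambda_s\neq \mu_s$. Since $\dc A$ is Hermitian and $\dc\lambda,\dc\mu$ are dual numbers (hence self-conjugate), computing $\vdc y^*\dc A\vdc x$ in two ways gives $\dc\lambda(\vdc y^*\vdc x) = \dc\mu(\vdc y^*\vdc x)$, i.e., $(\dc\lambda-\dc\mu)(\vdc y^*\vdc x)=0$. Because $\lambda_s-\mu_s\neq 0$, the dual number $\dc\lambda-\dc\mu$ is appreciable, and therefore invertible in the dual complex numbers, forcing $\vdc y^*\vdc x=0$. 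The main obstacle I anticipate lies in part (iii): I must be careful when passing between the system \eqref{en3} (which has a gauge freedom in $\vx_d$) and the claim that $\dc A$ has exactly $n$ eigenvalues. The simplicity of the spectrum of $A_s$ is precisely what rules out the pathology noted before Proposition \ref{p2.3} (non-Hermitian dual matrices with no, or infinitely many, eigenvalues) and guarantees a unique appreciable eigenvector per eigenvalue up to scaling.
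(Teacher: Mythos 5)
Your proof is correct. The paper itself gives no argument here — it simply states that the results ``follow directly from [QL23]'' — and your sketch supplies exactly the standard/dual-part decomposition argument (equations \eqref{en2}--\eqref{en3}) that this citation relies on: the Rayleigh-quotient computation with the pairing of $e_{ij}$ and $e_{ji}$ for (i), left-multiplication of \eqref{en3} by $\vx_s^*$ (using that $A_s$ is Hermitian and $\lambda_s$ real) for (iii), the eigenbasis expansion with the solvability condition of \eqref{en3} fixing $\lambda_{d,i}$ and the gauge freedom along $\vx_{s,i}$, and invertibility of the appreciable dual number $\lambda-\mu$ for (iv). No gaps.
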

	These results follow directly from \cite{QL23} and we omit the details of {their proofs} here.
	%Although	Perron-Frobenius Theorem is not true for the generic graph $\Phi$, we can  obtain some  related results.

	In general, the spectral radius of a  dual  matrix  is not well defined since a  dual matrix  may have no eigenvalue at all or have infinitely many eigenvalues. When the  dual  matrix  is Hermitian, all {of its} eigenvalues are dual numbers and we are able to define {their} order by \cite{QLY22}. Here, let  {$\rho_A(G)$} be the adjacency spectral radius of the underlying graph $G$ and  {$\rho_A(\Phi)$} be the adjacency spectral radius of the unit gain graph $\Phi$, respectively.
	The following theorem generalizes that of signed graphs \cite{CDD21, St19} and {complex} unit gain graphs {\cite{MKS22}}.
	
	\begin{Thm}\label{thm:spectrum_adj}
		Let $\Phi=(G,\varphi)$ be a $\udset$-gain graph. Then
		\begin{equation}\label{equ:rho_Phi<G}
			\rho_A(\Phi) \le \rho_A(G).
		\end{equation}
		Furthermore, if $\Phi$ is connected, then ${\lambda_1}(\Phi)=\rho_A(G)$ {(resp. $-\lambda_n(\Phi)=\rho_A(G)$)} if and only if $\Phi$ is balanced {(resp. antibalanced)}.
		{Here, {$\lambda_1(\Phi)$} {and {$\lambda_n(\Phi)$}} are the largest {and the smallest} eigenvalues of $A(\Phi)$, respectively.}
	\end{Thm}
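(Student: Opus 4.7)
The plan is to combine a Rayleigh identity for the extremal eigenvalue of $A(\Phi)$ with the dual-number triangle inequality in Lemma~\ref{prelim_dc} to push the comparison onto the underlying graph $G$, and then chase equality to produce a potential function.

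First, I would pick a unit eigenvector $\vdc x=(\dc x_1,\dots,\dc x_n)^{\top}$ for $\lambda_1(\Phi)$ and apply Proposition~\ref{lem:eig_adjac}(i) to expand
\begin{equation*}
\lambda_1(\Phi)=\sum_{e_{ij}\in E(\Phi)} 2\,\mathrm{Re}\bigl(\dc x_i^{*}\varphi(e_{ij})\dc x_j\bigr).
\end{equation*}
Using $\mathrm{Re}(\dc q)\le|\dc q|$ from Lemma~\ref{prelim_dc}(i), multiplicativity of $|\cdot|$ from Lemma~\ref{prelim_dc}(iii), and $|\varphi(e_{ij})|=1$, the right-hand side is dominated termwise by $2|\dc x_i|\,|\dc x_j|$, yielding
\begin{equation*}
\lambda_1(\Phi)\le|\vdc x|^{\top}A(G)\,|\vdc x|,
\end{equation*}
where $|\vdc x|$ is the nonnegative dual-number vector of componentwise magnitudes. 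Since $\sum_i|\dc x_i|^{2}=\vdc x^{*}\vdc x=1$, one has $\|\,|\vdc x|\,\|_{2}=1$, so upgrading the classical Rayleigh bound for the real symmetric matrix $A(G)$ to nonnegative dual-number test vectors gives $|\vdc x|^{\top}A(G)|\vdc x|\le\rho_A(G)$. Running the same argument on $-\Phi$, whose underlying graph is still $G$, bounds $-\lambda_n(\Phi)=\lambda_1(-\Phi)$ by $\rho_A(G)$ and completes the inequality $\rho_A(\Phi)\le\rho_A(G)$.

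For the equality characterization, the balanced direction is immediate from Theorem~\ref{spectrum}, which gives $\sigma_A(\Phi)=\sigma_A(G)$ whenever $\Phi$ is balanced. For the converse I would assume $G$ connected and $\lambda_1(\Phi)=\rho_A(G)$, so that every inequality in the chain above is tight. Tightness in the Rayleigh step forces $|\vdc x|$ to be a dual-number Perron eigenvector of $A(G)$, and by Perron--Frobenius the standard parts $|\dc x_{i,s}|$ are strictly positive, so each $\dc x_i$ is appreciable and invertible; termwise tightness then makes $\dc x_i^{*}\varphi(e_{ij})\dc x_j$ a positive appreciable dual number for every edge. Writing $\dc x_i=|\dc x_i|\theta_i$ with $\theta_i:=\dc x_i/|\dc x_i|\in\udcset$ and using commutativity of dual complex multiplication collapses this to $\theta_i^{*}\varphi(e_{ij})\theta_j=1$, equivalently $\varphi(e_{ij})=\theta_i\theta_j^{-1}$. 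Setting $\theta(v_i):=\theta_i^{-1}$ exhibits the required potential function, so $\Phi$ is balanced by Lemma~\ref{Reasonable}. The antibalanced statement then follows by applying the balanced case to $-\Phi$, since $A(-\Phi)=-A(\Phi)$ swaps $\lambda_1$ and $-\lambda_n$.

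The main obstacle I expect is the dual-number Rayleigh step: while the standard part is the classical bound for $A(G)$, one must separately check that $|\vdc x|^{\top}A(G)|\vdc x|\le\rho_A(G)\|\,|\vdc x|\,\|_{2}^{2}$ holds as dual numbers, and that tightness in the standard part forces $A(G)v=\rho_A(G)v$ so the dual-part terms cancel. The unit-norm identity $\mathrm{Re}(\vdc x_s^{*}\vdc x_d)=0$ supplied by $\|\vdc x\|_{2}=1$ performs exactly this bookkeeping, after which the passage from termwise equality to the potential function is routine because every $\dc x_i$ has been forced to be invertible.
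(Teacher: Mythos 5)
Your proposal is correct and follows essentially the same route as the paper: expand the extremal eigenvalue via Proposition~\ref{lem:eig_adjac}(i), bound termwise by magnitudes using Lemma~\ref{prelim_dc} to reach $\mathbf{y}^{*}A(G)\mathbf{y}$ with $\mathbf{y}=|\vdc x|$, and chase equality through the Perron--Frobenius vector to extract the potential function $\theta(v_i)=\dc x_i^{*}/|\dc x_i|$. The only (harmless) variation is in handling the dual parts at the boundary case: you use the orthogonality of the standard and dual parts of the unit eigenvector to make the Rayleigh bound a genuine dual-number inequality, whereas the paper shows directly that $\lambda_d=0$ via the unit-gain relation $B_{ijs}^{*}B_{ijd}+B_{ijd}^{*}B_{ijs}=0$; both close the same gap.
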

	\begin{proof} Denote {$A = A(G)$, and} $B = (B_{ij}) = A(\Phi) = B_s + B_d\epsilon$ with $B_{ij} = {B_{ijs} + B_{ijd}\epsilon =} \varphi(e_{ij})$.
		Suppose $\vdc x\in {\dgset}^n$ is a unit eigenvector {of $B$} corresponding to an eigenvalue   {$\lambda$ of  $B$}. Let $ {\bf y}=|\vdc x|$. Then we have
		\begin{eqnarray}
			\nonumber	|\lambda| &=&  |\vdc x^*{B} \vdc x| \\
			\nonumber		&=& 2{\left|\sum_{{e_{ij}}\in E(\Phi)}  Re({\dc x_i^*} \varphi(e_{ij})\dc x_j)\right|} \\
			\label{equ:rho_Phi_G}	&\le &   2\sum_{{e_{ij}}\in E(\Phi)}  |{\dc x_i^*} \varphi(e_{ij})\dc x_j| \\
			\nonumber		& =  &  2 \sum_{{e_{ij}}\in E(\Phi)}  |{\dc x_i^*}|| \varphi(e_{ij})||\dc x_j| \\
			\nonumber	&=&   2 \sum_{{e_{ij}}\in E(\Phi)}  |{\dc x_i^*}| |\dc x_j| \\
			\nonumber	&=&  {\bf y}^* A(G)   {\bf y},%\\
			%&\le & \rho_A( G),
		\end{eqnarray}
		where the first and the second equalities follow from  {Proposition}~\ref{lem:eig_adjac}\,(i), the first inequality and the third equality follow  from Lemma~\ref{prelim_dc},  the fourth equality follows from $ \varphi(e_{ij})\in\udcset$ for any ${e_{ij}}\in {E(\Phi)}$.
		Therefore,
		\[St(|\dc  {\lambda}|) \le St({\bf y}^* A(G)   {\bf y})\le  \rho_A( G).\]
		If $St(|{\lambda}|) < \rho_A( G)$ for  {any eigenvalue $\lambda$ of $B$}, then $\rho_A(\Phi) < \rho_A(G)$.
		Otherwise, if there exists an eigenvalue $\lambda$ of   {$B$ satisfying} $St(|{\lambda|}) = St({\bf y}^* A(G)   {\bf y})=\rho_A( G)$, we claim that ${\lambda_d}=0$.

		By Proposition~\ref{lem:eig_adjac},  {we have}
		\[{\lambda_d} = \vx_s^*B_d\vx_s= \sum_{{e_{ij}}\in E(\Phi)} \dc x_{is}^* B_{ijd}\dc x_{js} = \sum_{{e_{ij}}\in E(\Phi)\cap \mathcal A_x} \dc x_{is}^* B_{ijd}\dc x_{js},\]
		where $\mathcal A_x = \{{e_{ij}}\,| \, x_{is} \neq 0, \ x_{js} \neq 0\}$.
		Since $St(|{\lambda}|) = St({\bf y}^* A(G)   {\bf y})$,  the equality of the standard part holds true in \eqref{equ:rho_Phi_G}. Therefore, for all ${e_{ij}}\in E(\Phi)\cap\mathcal A_x$, {we have}
		\[B_{ijs} = \frac{x_{is}}{|x_{is}|}\frac{x_{js}^*}{|x_{js}|}.\]
		Since each element in $B$ is a unit dual complex number, there is 	$B_{ijs}^*B_{ijd} + B_{ijd}^*B_{ijs}=0$ for all ${e_{ij}}\in E(\Phi)$.
		Therefore,
		\begin{eqnarray*}
			{\lambda_d} &=& \sum_{{e_{ij}}\in E(\Phi)\cap \mathcal A_x} \dc x_{is}^* B_{ijd}\dc x_{js} \\
			&=& \sum_{{e_{ij}}\in E(\Phi)\cap \mathcal A_x}   B_{ijd} B_{ijs}^* |x_{is}| |x_{js}| \\
			&=& \frac12 \sum_{{e_{ij}}\in E(\Phi)\cap \mathcal A_x}   (B_{ijd} B_{ijs}^* +B_{jid} B_{jis}^*)|x_{is}| |x_{js}| \\
			&=& \frac12 \sum_{{e_{ij}}\in E(\Phi)\cap \mathcal A_x}   (B_{ijd} B_{ijs}^* +B_{ijd}^* B_{ijs})|x_{is}| |x_{js}| \\
			&=& 0.
		\end{eqnarray*} 	
		Here, the {third} equality repeats all edges twice, the fourth equality follows from {the fact that} $B$ is Hermitian, and {the} last equality {holds} {because} each element in $B$ is a unit dual complex number.
		This proves {$|\lambda| \le \rho_A(G)$ for any eigenvalue $\lambda$ of $A(\Phi)$. Thus,} $\rho_A(\Phi) \le \rho_A(G)$.

		We now prove the second conclusion of this theorem.  Since $\Phi$ is connected, $A(G)$ is a {real} irreducible nonnegative matrix. It follows from the Perron-Frobenius Theorem that $\rho_A(G)$ is an eigenvalue of $A(G)$, i.e.,    the largest eigenvalue $\lambda_1(G) = \rho_A(G)$.
		Furthermore, the eigenvector corresponding to {$\lambda_1(G)$} is {real and} positive.

		On one hand, suppose that $\lambda_1(\Phi) = \rho_A(G)$, then   {the    inequality \eqref{equ:rho_Phi_G}} holds with equality {and ${\bf y}^* A(G)   {\bf y}= \rho_A(G)$}.  {This} implies that
		$$c_{ij}:={\dc x_i^*}\varphi(e_{ij})\dc x_j = |{\dc x_i^*}| |\dc x_j|$$
		is a positive real number for all ${e_{ij}}\in E$ {and ${\bf y}_s$} is the Perron-Frobenius eigenvector of $A(G)$.
		Since {${\vy_s}$} is a positive vector,   {$|x_i|>0$} and $\dc x_i$ is invertible for all $i=1,\dots,n$.
		Therefore,
		\[\varphi(e_{ij}) = ({\dc x_i^*})^{-1}c_{ij}\dc x_j^{-1} = c_{ij} \frac{{\dc x_i}}{|\dc x_i|^2}\frac{{\dc x_j^*}}{|\dc x_j|^2}= \frac{{\dc x_i}}{|\dc x_i|}\frac{{\dc x_j^*}}{|\dc x_j|},\]
		where the last equality follows from $|\varphi(e_{ij})|=1$.
		{In {the} other words},
		\[\theta(v_i) = \frac{{\dc x_i^*}}{|\dc x_i|}\]
		is a potential function of $\Phi$. This verifies that $\Phi$ is balanced.
		
		{On the other hand, s}uppose $\Phi$ is balanced. It follows from  {Proposition}~\ref{lem:similar} that $A(\Phi)$ and $A(G)$ share the same set of eigenvalues. Therefore,  $\lambda_1(G)=\lambda_1(\Phi)$ and
		$$\rho_A(\Phi) \le \rho_A(G)  =\lambda_1(G)=\lambda_1(\Phi)\le \rho_A(\Phi),$$
		where the first inequality follows from  \eqref{equ:rho_Phi<G} and the second equality  follows from the  Perron-Frobenius Theorem.
		Hence, $\lambda_1(\Phi)=\rho_A(G)$.

		{Finally, the last conclusion follows from $-\lambda_n(\Phi) = \lambda_1(-\Phi)$  and $\Phi$ is antibalanced if and only if $-\Phi$ is balanced.}
		
		This completes the proof.
	\end{proof}
	
	%		\begin{Cor}
		%			Let $\Phi=(G,\varphi)$ be a $\udcset$-gain graph and $\Phi$ be connected. Then  $-\lambda_n(\Phi)=\rho_A(G)$ if and only if  $\Phi$ is antibalanced.  {Here, $\lambda_n(\Phi)$ is the smallest eigenvalue of $A(\Phi)$.}
		%		\end{Cor}
	%		\begin{proof}
		%			This follows directly from  {the fact that} $\lambda(-\Phi) = -\lambda(\Phi)$, and $\Phi$ is antibalanced if and only if $-\Phi$ is balanced.
		%		\end{proof}
	
	The following result follows partially from the Gershgorian type theorem for dual quaternion Hermitian matrices in \cite{QWL23}.
	\begin{Prop}
		Let $\Phi=(G,\varphi)$ be a $\udset$-gain graph. Then
		\[{\rho_A(\Phi)} \le \Delta.\]
		{The equality holds if and only if $G$ is $\Delta$-regular and either $\Phi$ or $-\Phi$ is balanced.}
		{Here, $\Delta$ is the maximum vertex degree of $G$.}
	\end{Prop}
	\begin{proof}
		Suppose $\dc \lambda \in\dgset$ is an eigenvalue of $A(\Phi)$ and $\vdc x$ is its corresponding eigenvector; namely, $A(\Phi)\vdc x = \lambda\vdc x$.
		Let $i\in\arg\max_l |\dc x_l|$.  Then we have
		\[ |\lambda||\dc x_i| = |\lambda\dc x_i| = {\left|\sum_{j\neq i} \varphi(e_{ij})\dc x_j\right|} \le \sum_{j\neq i} |\varphi(e_{ij})||\dc x_j|.\]
		Therefore,
		\[|\dc \lambda| \le \max_i \sum_{j\neq i} |\varphi(e_{ij})| =\max_i \ d_i = \Delta.\]
		{Furthermore, it is well-known that, if $G$ is a simple connected graph with maximum vertex degree $\Delta$, then $\rho_A(G) \le \Delta$, and equality holds if and only if $G$ is regular.   By Theorem \ref{thm:spectrum_adj}, the equality in this proposition holds if and only if $G$ is $\Delta$-regular and either $\Phi$ or $-\Phi$ is balanced.}
		This completes the proof.	
	\end{proof}

	\section{Coefficient Theorems of Characteristic Polynomials}

		Let $A=(a_{ij})$ be a Hermitian matrix in $\gset^{n\times n}$ and $\sigma$ be a permutation of $S_n=\{1,\dots,n\}$.
		Write $\sigma$ as a product of disjoint cycles, i.e.,
		\[\sigma=(n_{11}\cdots n_{1l_1})(n_{21}\cdots n_{2l_2})\cdots(n_{r1}\cdots n_{rl_r}),\]
		where $n_{i1}<n_{ij}$ for all $j>1$ and $n_{11}>n_{21}>\cdots>n_{r1}$  for each $i=1,\dots,r$.
		Then 	the {\bf Moore determinant} for quaternion matrices  defined by Eliakim Hastings Moore \cite{Moore22} is written as follows,
		\begin{equation}\label{equ:Mdet}
			\mathrm{Mdet}(A) = \sum_{\sigma\in S_n} |\sigma|a_{\sigma},
		\end{equation}
		where $|\sigma|$ denotes the parity of $\sigma$ and
		\[a_{\sigma} = (a_{n_{11},n_{12}}a_{n_{12},n_{13}}\cdots a_{n_{1l_1},n_{11}})(a_{n_{21},n_{22}}\cdots a_{n_{2l_2},n_{21}})\cdots(\cdots a_{n_{rl_r},n_{r1}}).\]
		For more results on the Moore determinant, please refer to \cite{Asl96, Kyr12}.
		
		The adjacency and Laplacian matrices of a unit gain graph are Hermitian matrices.  To study their characteristic polynomials, we need to consider the determinants of these matrices. When the Hermitian matrix is complex or real, we may use the ordinary determinant.  However, if the matrix is quaternion, as the multiplication is not commutative, the ordinary determinant is not well-defined.  Then,  the Moore determinant may be used as in \cite{BBCRS22}.
		
		%The Moore determinant retains some basic properties of determinants, i.e., $\mathrm{Mdet}(A) =0$ if and only if $A$ is singular and $\mathrm{Mdet}(A)$ equals to the product of the   eigenvalues of $A$. However, some other properties such as $\mathrm{Mdet}(AB) = \mathrm{Mdet}(A) \mathrm{Mdet}(B)$ may not hold since $AB$ may not be a Hermitian matrix even if both $A$ and $B$ are Hermitian matrices.

		Recently, \cite{CQ24}   studied the Moore determinant of dual Hermitian matrices. Given a dual Hermitian matrix $A\in\dgset^{n\times n}$,  $\mathrm{Mdet}(A) =0$ if and only if $A_s$ is singular and there exist at least one zero or two infinitesimal eigenvalues.
		The value  $\mathrm{Mdet}(A)$ equals to the product of the   eigenvalues of $A$.   These pave  a way for studying characteristic polynomials of such dual Hermitian matrices.

	\bigskip
	
	We now    {study the Coefficient Theorem for {dual complex and  quaternion unit gain graphs}.
		This extends the  Coefficient Theorem of simple graphs \cite{CDS95},  {signed} graphs \cite{BCKW18}, and  {$\ucset$}-gain graphs \cite{MKS22}.
		We begin with the following lemma.
		
		\begin{Lem}\label{lem:cyc2realgain}
			Let $\Phi=(G,\varphi)$ be a $\udset$-gain graph and let $\mathcal C(G)$ be the set of cycles of $G$. Then the following function
			\begin{equation}\label{equ:realgainCycles}
				\mathcal R: C\in\mathcal C(G) \rightarrow Re(\varphi(\vec{C}_v))\in\drset
			\end{equation}
			is well defined and independent of the choice of  $v\in C$ and the direction of the directed cycle $\vec{C}_v$.
		\end{Lem}
		\begin{proof}
			Let $v_1=v$, $\vec{C}_v=v_1 e_{12} v_2\cdots v_l e_{l1} v_1$ and $\vec{C}_v^{-1}=v_1 e_{1 l} v_l\cdots v_2 e_{21} v_1$.
			Then $\varphi(\vec{C}_v), \varphi(\vec{C}_v^{-1}) \in\udset$ and $\varphi(\vec{C}_v)\varphi(\vec{C}_v^{-1})=1$.
			Therefore, the two dual unit elements $\varphi(\vec{C}_v)$ and $\varphi(\vec{C}_v^{-1})$ are conjugate and share the same real parts.
			In other words, $\mathcal R(\vec C_v)$ is independent of the direction of the   cycle.
			
			Furthermore, let $w=v_t$ with $t\in \{2,\dots,l\}$ and  $\vec{C}_w=v_t e_{t,t+1} v_{t+1}\cdots v_{t-1} e_{t-1,t} v_t$.
			Denote $q_1=\varphi(e_{12})\cdots \varphi(e_{t-1,t})$ and  $q_2=\varphi(e_{t,t+1})\cdots \varphi(e_{l,1})$. Then there is $\varphi(\vec{C}_v)=q_1q_2$,  $\varphi(\vec{C}_w)=q_2q_1$, and
			\[Re(\varphi(\vec{C}_v))=Re(q_1q_2)=Re(q_2q_1)=Re(\varphi(\vec{C}_w)).\]
			The proof is completed.
		\end{proof}
		
		Let $C_r$ denote the cycle of order $r$ and let $K_r$ be the complete graph with $r$ vertices.
		An elementary  graph  is any graph in the set $\{K_2,C_r (r\ge3)\}$ and a basic  graph  is the disjoint union of elementary  graphs.
		Suppose $B$  is a basic  graph. Denote  $\mathcal C(B)$ as the class of cycles in $B$,  $p(B)$ as the number of components of $B$.
		Let $\mathcal B_n(G)$ be the set of subgraphs of $G$ that are basic graphs with $n$ vertices and
		\[c(B) = |\mathcal C(B)|,\quad \mathcal R(B)=\Pi_{C\in\mathcal C(B)} \mathcal R(C).\]
		Here, $\mathcal R(\cdot)$ is defined by \eqref{equ:realgainCycles}.
		
		\begin{Thm}[Coefficient Theorem]
			Let $\Phi=(G,\varphi)$ be a $\udset$-gain graph with $n$ vertices. Then
			\[\mathrm{Mdet} (A(\Phi)) = \sum_{B\in\mathcal B_n(G)} (-1)^{n+p(B)} 2^{c(B)}\mathcal R(B). \]
			Furthermore, let 	 			 $p_{A(\Phi)}(x)=x^n+\sum_{i=1}^nc_i x^{n-i}$
			be the characteristic polynomial of $A(\Phi)$. Then
			\[c_i=\sum_{B\in\mathcal B_i(G)}(-1)^{p(B)}2^{c(B)}\mathcal R(B).\]
		\end{Thm}
		\begin{proof}
			By Lemma~\ref{lem:cyc2realgain}, the results can be derived following the same proof with that of \cite{BBCRS22} and we omit the details  here.
			% 			Suppose the permutation $\sigma=(n_{11}\cdots n_{1l_1})(n_{21}\cdots n_{2l_2})\cdots(n_{r1}\cdots n_{rl_r})$.
			% 			It follows from the definition of the adjacency matrix, the value
			% 			\[a_{\sigma} = (a_{n_{11},n_{12}}a_{n_{12},n_{13}}\cdots a_{n_{1l_1},n_{11}})(a_{n_{21},n_{22}}\cdots a_{n_{2l_2},n_{21}})\cdots(\cdots a_{n_{rl_r},n_{r1}})\]
			% 			is nonzero if and only of $E_j=v_{n_{j1}}v_{n_{j2}}\cdots v_{n_{jl_j}}$, $j=1,\dots,r$ from a basic figure $B\in \mathcal B_n(G)$.
			% 			Furthermore, the  parity $|\sigma|=(-1)^{n+p(B)}$ \cite{BBCRS22}.
			% 			
			% 			For each $E_j$, $j=1,\dots,r$,  the cycles $\vec{C}_j =   v_{n_{j1}}v_{n_{j2}}\cdots v_{n_{jl_j}}v_{n_{j1}}$ and $\vec{C}_j^{-1} =   v_{n_{j1}}v_{n_{jl_j}}\cdots v_{n_{j2}}v_{n_{j1}}$ always exist in pairs.
			% 			
			% 			where for each $i$, we have $n_{i1}<n_{ij}$ for all $j>1$ and $n_{11}>n_{21}>\cdots>n_{r1}$.
		\end{proof}
		
		% 			\begin{Thm}[Coefficient Theorem] Let $\Phi=(G,\varphi)$ be a $\udset$-gain graph with $n$ vertices, and let
			% 				\[p_{A(\Phi)}(x)=x^n+\sum_{i=1}^nc_i x^{n-i}\]
			% 				be the characteristic polynomial of $A(\Phi)$. Then
			% 				\[c_i=\sum_{B\in\mathcal B_i(G)}(-1)^{p(B)}2^{c(B)}\mathcal R(B).\]
			% 				
			% 			\end{Thm}
	}
	
	As the eigenvalues of a dual Hermitian matrix are dual numbers, the coefficients of the characteristic polynomial of a dual Hermitian matrix are also dual numbers.   It is difficult to handle such a polynomial.  Furthermore, the roots of such a characteristic polynomial are not necessary to be eigenvalues of the dual Hermitian matrix.
	{For instance, the roots of $(\lambda-1)^2=0$ are $1+b\epsilon$ for all $b\in \rset$.} See \cite{QC24} {for more details}.
	
	We may take another approach to solve this problem.   Suppose that $A(\Phi) = A_s(\Phi) + A_d(\Phi)\epsilon$. Then we may calculate the coefficients of the characteristic polynomial of $A_s(\Phi)$ by the coefficient theorem for complex or quaternion unit gain graphs.   Note that this polynomial is of real coefficients as $A_s(\Phi)$ is a complex or quaternion Hermitian matrix and if $\lambda_s$ is a root of this characteristic polynomial, then it is the standard part of an  eigenvalue $\lambda$ of $A(\Phi)$. If $\lambda_s$ is a single root of this characteristic polynomial, then by \cite{QC24}, we have $\lambda = \lambda_s + \lambda_d\epsilon$, where $\lambda_d = \vx_s^*A_d(\Phi)\vx_s$ and $\vx_s$ is a unit eigenvector of $A_s(\Phi)$, associated with $\lambda_s$.   If $\lambda_s$ is a $k$-multiple root of this characteristic polynomial, then by \cite{QC24}, there are $k$ eigenvalues $\lambda_i = \lambda_s + \lambda_{di}\epsilon$ of $A(\Phi)$, for $i = 1,\dots, k$, where $\lambda_{d1},\dots, \lambda_{dk}$ are eigenvalues of the {\bf supplement matrix} $W^*A_d(\Phi)W$, $W = (\vv_1, \dots, \vv_k)$ and  $\vv_1,\dots, \vv_k$ are $k$ orthonormal eigenvectors of $A_s$, associated with $\lambda_s$.
Note that the supplement matrix $W^*A_d(\Phi)W$ is a complex or quaternion Hermitian matrix.
		Its elements are not unit elements.  Furthermore, $W^*A_d(\Phi)W$ may not keep the special structure of $A_d$. In this way, may we still use the coefficient theorem to calculate their coefficients?

	\section{Eigenvalues of Laplacian Matrices}
	
	Similar {to} Proposition~\ref{lem:similar}, the switching class has also a unique Laplacian spectrum.

	\begin{Prop}\label{lem:similar_Laplace}
		Let $\Phi_1=(G,\varphi_1)$ and $\Phi_2=(G,\varphi_2)$ {be both} $\udset$-gain graphs. If $\Phi_1\sim \Phi_2$, then {$\Phi_1$ and $\Phi_2$} have the same {Laplacian} spectrum. That is, $\sigma_L(\Phi_1) = \sigma_L(\Phi_2)$.
	\end{Prop}
	
	The interlacing theorem of the adjacency matrix in Theorem~{\ref{Thm:interlacing_adj}}  also holds  for the Laplacian matrix.
	\begin{Thm}(Interlacing Theorem of the Laplacian Matrix) \label{Thm:interlacing_Lap} Let $\Phi=(G,\varphi)$ be a $\udset$-gain graph with $n$ vertices and $S$ be a subset of $V$ with $k$ vertices.
		Denote the eigenvalues of $L(\Phi)$ and  $L(\Phi[S])$ by
		\[{\dc \lambda_1\ge \dc  \lambda_2\cdots \ge \dc \lambda_n  \quad \text{and} \quad \dc  \mu_1\ge \dc \mu_2\ge \cdots \ge \dc  \mu_k,}\]
		respectively.
		Then the following inequalities hold:
		\begin{equation*}%\label{equ:interlacing}
			\dc \lambda_i\ge \dc  \mu_i\ge \dc \lambda_{n+i-k},\quad \forall\, 1\le i\le k.
		\end{equation*}
	\end{Thm}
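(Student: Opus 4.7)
The plan is to mirror the proof of Theorem~\ref{Thm:interlacing_adj} verbatim, with the Laplacian matrix replacing the adjacency matrix throughout. Since $L(\Phi)$ and $L(\Phi[S])$ are both dual complex Hermitian, the spectral theorem cited from \cite{QL23} supplies orthonormal bases of dual complex eigenvectors $\{\vdc x_1,\dots,\vdc x_n\}$ of $L(\Phi)$ and $\{\vdc y_1,\dots,\vdc y_k\}$ of $L(\Phi[S])$. Combining Theorem~4.4 in \cite{LQY23} with Proposition~\ref{Lem:DCspace}(ii) then yields the same max--min characterizations used before,
\[\dc\lambda_i=\max_{\vdc x\in\dc U}\|\vdc x\|^{-2}(\vdc x^* L(\Phi)\vdc x),\qquad \dc\mu_i=\min_{\vdc x\in\dc V}\|\vdc x\|^{-2}(\vdc x^* L(\Phi[S])\vdc x),\]
on $\dc U=\text{span}(\vdc x_i,\dots,\vdc x_n)$ and $\dc V=\text{span}(\vdc y_1,\dots,\vdc y_i)$, with a symmetric pair for the lower-end inequality.

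After reordering the vertices so that $S$ indexes the leading block, I would introduce the lifted subspace $\dc W=\{(\vy,0)^\top\in\dcset^n:\vy\in\dc V\}$ and repeat the dimension-counting step: the $2n\times(2n+2)$ coefficient matrix obtained by matching standard and dual parts of $\dc U\vv=\dc W\vz$ has a nontrivial kernel, producing a nonzero $\vdc w=(\vy,0)^\top\in\dc U\cap\dc W$. Evaluating the Rayleigh quotient of $L(\Phi)$ at $\vdc w$ reduces to a quadratic form in $\vy$ against the upper-left principal block of $L(\Phi)$, and chasing it through the chain
\[\dc\lambda_i\ge\frac{\vdc w^* L(\Phi)\vdc w}{\|\vdc w\|^2}=\frac{\vy^*(L(\Phi[S])+\dd_S)\vy}{\|\vy\|^2}\ge\frac{\vy^* L(\Phi[S])\vy}{\|\vy\|^2}\ge\dc\mu_i\]
gives the first inequality $\dc\lambda_i\ge\dc\mu_i$. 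The mirror choice $\dc U=\text{span}(\vdc x_1,\dots,\vdc x_{n+i-k})$ and $\dc V=\text{span}(\vdc y_i,\dots,\vdc y_k)$ is intended to produce $\dc\mu_i\ge\dc\lambda_{n+i-k}$ by the same dimension-counting argument.

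The main obstacle is that, unlike $A(\Phi[S])$, the matrix $L(\Phi[S])$ is not literally a principal submatrix of $L(\Phi)$: the actual principal submatrix $L(\Phi)[S]$ equals $L(\Phi[S])+\dd_S$, where $\dd_S$ is the diagonal matrix whose $j$-th entry counts the edges from $j\in S$ to $V\setminus S$. This nonnegative diagonal correction has exactly the right sign to close the upper-end inequality (as above), but it points the wrong way in the lower-end estimate, where one would need $\vdc w^* L(\Phi)\vdc w$ bounded from below by $\vy^* L(\Phi[S])\vy$ rather than from above. Pushing the second inequality through will therefore require an extra step absorbing $\dd_S$ — for instance, by first applying a dual complex Cauchy-type interlacing between $L(\Phi)$ and its principal submatrix $L(\Phi)[S]$, and then using a Weyl-type comparison between $L(\Phi)[S]$ and $L(\Phi[S])$ in the dual complex setting. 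Controlling the diagonal correction $\dd_S$ in both directions simultaneously is the technical crux of the proof.
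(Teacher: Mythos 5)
You are right that the only nontrivial issue is the discrepancy between $L(\Phi[S])$ and the principal submatrix $L(\Phi)[S]=L(\Phi[S])+\dd_S$, and your treatment of the upper inequality $\dc \lambda_i\ge \dc \mu_i$ is correct: the nonnegative diagonal correction $\dd_S$ only helps in that direction. For comparison, the paper supplies no proof at all --- it merely asserts that the argument of Theorem~\ref{Thm:interlacing_adj} carries over --- and a verbatim transcription of that argument actually proves interlacing between $L(\Phi)$ and the principal submatrix $L(\Phi)[S]$, since the block decomposition used there places the principal submatrix, not $L(\Phi[S])$, in the upper-left corner.

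The lower inequality, which you leave open as ``the technical crux,'' cannot be repaired. Your proposed combination of Cauchy interlacing (between $L(\Phi)$ and $L(\Phi)[S]$) with a Weyl comparison (between $L(\Phi)[S]$ and $L(\Phi[S])$, using $\dd_S\succeq 0$) yields $\lambda_i(L(\Phi)[S])\ge \dc\lambda_{n+i-k}$ and $\lambda_i(L(\Phi)[S])\ge \dc\mu_i$ --- two lower bounds on the same quantity, from which no relation between $\dc\mu_i$ and $\dc\lambda_{n+i-k}$ follows. In fact the inequality $\dc\mu_i\ge\dc\lambda_{n+i-k}$ is false as stated: take $\Phi$ to be $K_3$ with all gains equal to $1$; it is balanced, so by Theorem~\ref{spectrum} its Laplacian spectrum is $\{3,3,0\}$. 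Let $S$ consist of any two vertices, so that $\Phi[S]=K_2$ and its Laplacian spectrum is $\{2,0\}$. With $n=3$, $k=2$, $i=1$ the claimed inequality reads $\dc\mu_1\ge\dc\lambda_2$, i.e.\ $2\ge 3$. The statement (and your proof) is salvaged exactly by replacing $L(\Phi[S])$ with the principal submatrix $L(\Phi)[S]$ throughout, after which your first two paragraphs already constitute a complete proof and the third is unnecessary.
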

	The proof is similar to the proof of Theorem~{\ref{Thm:interlacing_adj}}  and we omit the  details of the proof here.
	
	{Let the signless Laplacian matrix be $Q(G)=D(G)+A(G)$ and $\Phi^{1}= (G,-1)$ be the $\udcset$-gain graph with all gains $-1$.}
	{Again, since the Laplacian matrix of a dual complex unit gain graph $\Phi$ is Hermitian, we may denote the spectral {radius} of that  Laplacian matrix {(resp. signless Laplacian matrix) of $\Phi$} as $\rho_L(\Phi)$ {(resp. $\rho_Q(\Phi)$)}, the {spectral} radius of the Laplacian matrix {(resp. signless Laplacian matrix)} of its underlying graph $G$ as $\rho_L(G)$ {(resp. $\rho_Q(G)$)}, {respectively}, and discuss their properties.}
	%Theorem~\ref{thm:spectrum_Lap}	also holds  for the Laplacian matrix.
	{Then we have the following theorem.}
	
	\begin{Thm}\label{thm:spectrum_Lap}
		Let $\Phi=(G,\varphi)$ be a $\udset$-gain graph and $\Phi^1=(G,-1)$. Then
		\begin{equation*}%\label{equ:rho_Phi<G}
			\rho_L(\Phi) \le {\rho_L(\Phi^1)=\rho_Q(G)}.
		\end{equation*}
		Furthermore, if $\Phi$ is connected, {then $\rho_L(\Phi)=\rho_Q(G)$} if and only if
		{$\Phi\sim \Phi^1$.}
		%$\Phi$ is \red{antibalanced.}
		%{Here, $\red{\lambda_{1,L}}(\Phi)$ is the largest eigenvalue of $L(\Phi)$.}
	\end{Thm}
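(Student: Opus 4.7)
The plan is to parallel the proof of Theorem~\ref{thm:spectrum_adj}, replacing the adjacency quadratic form with the Laplacian Dirichlet form. For any eigenvalue $\dc \lambda$ of $L(\Phi)$ with appreciable unit eigenvector $\vdc x$, I would first rewrite
\[
\dc \lambda = \vdc x^* L(\Phi) \vdc x = \sum_{e_{ij}\in E(\Phi),\, i<j} |\dc x_i - \varphi(e_{ij})\dc x_j|^2,
\]
using $|\varphi(e_{ij})|=1$ and the identity $|a-b|^2 = |a|^2+|b|^2 - 2Re(a^*b)$ in $\dcset$, together with Proposition~\ref{lem:eig_adjac}(i). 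Setting $\vy = |\vdc x|$ and applying the triangle inequality from Lemma~\ref{prelim_dc}(iv) edgewise, followed by squaring (order-preserving among nonnegative dual numbers since the product of nonnegative dual numbers is nonnegative), yields
\[
\dc \lambda \le \sum_{e_{ij}\in E(\Phi),\, i<j} \bigl(|\dc x_i| + |\dc x_j|\bigr)^2 = \vy^* Q(G)\, \vy \le \rho_Q(G),
\]
because $L(\Phi^1) = D(G)+A(G) = Q(G)$ and $\|\vy\|_2 = 1$. The last inequality is the Rayleigh quotient bound for Hermitian dual-complex matrices (Theorem 4.4 of \cite{LQY23}, already invoked elsewhere in this paper). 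This establishes $\rho_L(\Phi) \le \rho_Q(G) = \rho_L(\Phi^1)$.

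For the equality statement, the reverse direction is immediate from Proposition~\ref{lem:similar_Laplace}: $\Phi\sim\Phi^1$ gives $\sigma_L(\Phi)=\sigma_L(\Phi^1)$, hence $\rho_L(\Phi)=\rho_Q(G)$. For the forward direction, suppose $\rho_L(\Phi) = \rho_Q(G)$ and let $\vdc x$ realize the extremal eigenvalue. Equality in the Rayleigh bound forces $\vy$ to be a Perron--Frobenius eigenvector of $Q(G)$; since $G$ is connected, $Q(G)$ is irreducible and nonnegative, so the standard part $\vy_s$ is strictly positive and every $\dc x_i$ is appreciable, hence invertible. Simultaneously, equality in the edgewise triangle inequality $|\dc x_i-\varphi(e_{ij})\dc x_j| = |\dc x_i|+|\dc x_j|$ forces $\dc x_i = -\alpha_{ij}\varphi(e_{ij})\dc x_j$ for some positive dual number $\alpha_{ij}$. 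Taking magnitudes and using $|\varphi(e_{ij})|=1$ gives $\alpha_{ij} = |\dc x_i|/|\dc x_j|$, whereupon the identity rearranges to $\varphi(e_{ij}) = -\zeta(v_i)\zeta(v_j)^{-1}$ with the switching function $\zeta(v_i) := \dc x_i/|\dc x_i|$. A direct substitution into the definition of $\varphi^\zeta$ then gives $\varphi^\zeta(e_{ij}) \equiv -1$ on every edge, so $\Phi\sim\Phi^1$, as required.

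The main technical obstacle is justifying the equality case of the triangle inequality for dual complex numbers: showing that $|a-b|=|a|+|b|$ with $a,b$ appreciable forces $a=-\alpha b$ for some positive dual number $\alpha$. Using the explicit formula for $|\cdot|$ in \eqref{e1} and separating standard and dual parts, this reduces to the familiar complex-number equality case at the standard level (giving $a_s = -\alpha_s b_s$ with $\alpha_s>0$) plus a linear compatibility relation at the dual level that is automatically satisfied once the standard parts are aligned and both $a, b$ have unit magnitudes of the gain factored out. Once this characterization is in hand, the reconstruction of the switching function $\zeta$ proceeds essentially as in the proof of Theorem~\ref{thm:spectrum_adj}, with $-1$ playing the role of the balancing gain.
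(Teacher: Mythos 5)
Your Dirichlet-form derivation of $\rho_L(\Phi)\le\rho_Q(G)=\rho_L(\Phi^1)$ and the backward implication via Proposition~\ref{lem:similar_Laplace} are sound, and they follow the same template as the paper's proof of Theorem~\ref{thm:spectrum_adj} (the paper omits the proof of this theorem as analogous). The genuine gap is in the forward implication, exactly at the step you flag as ``the main technical obstacle.'' You claim that $|\dc x_i-\varphi(e_{ij})\dc x_j|=|\dc x_i|+|\dc x_j|$ forces $\dc x_i=-\alpha_{ij}\varphi(e_{ij})\dc x_j$ for a positive dual number $\alpha_{ij}$, and you justify the dual-part half of this by noting that the dual-level compatibility relation is ``automatically satisfied.'' That observation is correct, but it implies the opposite of what you need: a short computation with \eqref{e1} shows that once $a_s=-t\,b_s$ with $t>0$, the dual parts of $|a-b|$ and of $|a|+|b|$ coincide identically, for \emph{arbitrary} $a_d,b_d$. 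So the equality case of the triangle inequality imposes no constraint at all on the dual parts, and you may only conclude $St(\dc x_i)=-t_{ij}\,St(\varphi(e_{ij})\dc x_j)$, i.e.\ $St(\varphi^{\zeta}(e_{ij}))=-1$. The dual part of $\varphi^{\zeta}(e_{ij})$ remains an arbitrary purely imaginary number, and $\Phi\sim\Phi^1$ does not follow.

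This is not a repairable bookkeeping issue within your argument. Take $G=C_3$ with gains $-1$, $-1$, $-1+i\beta\epsilon$ for real $\beta\neq0$ (the last is a unit dual complex number). Then $St(L(\Phi))=Q(C_3)$ has simple largest eigenvalue $4$ with unit Perron vector $\vv=(1,1,1)^{\top}/\sqrt{3}$, and $\vv^{*}\,Du(L(\Phi))\,\vv=-\tfrac13(i\beta-i\beta)=0$, so by the perturbation formula of Proposition~\ref{lem:eig_adjac}(iii) applied to the Hermitian matrix $L(\Phi)$ we get $\rho_L(\Phi)=4=\rho_Q(C_3)$; yet the cycle gain is $-1+i\beta\epsilon\neq(-1)^3$, and cycle gains are switching invariants, so $\Phi\not\sim\Phi^1$. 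Hence the forward implication cannot be extracted from the Rayleigh-quotient equality alone: the dual parts of the gains must be controlled by some independent mechanism. (The same blind spot is latent in the equality clause of Lemma~\ref{prelim_dc}(i) as stated --- $Re(q)=|q|$ holds for $q=1+i\epsilon$ --- and therefore also in the template proof of Theorem~\ref{thm:spectrum_adj} that you are imitating; you should raise this with the authors rather than try to push the present argument through.)
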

	
	\begin{Prop}
		Let $\Phi=(G,\varphi)$ be a $\udset$-gain graph. Then
		\[{\rho_L(\Phi)} \le 2\Delta.\]
		{Here, $\Delta$ is the maximum vertex degree of $G$.}
		{Moreover, the equality holds if and only if $G$ is $\Delta$-regular and $\Phi\sim \Phi^1$.}
	\end{Prop}
	\begin{proof}
		{Suppose $\lambda \in\dgset$ is an eigenvalue of $L(\Phi)$ and $\vdc x$ is its corresponding eigenvector; namely, $L(\Phi)\vdc x = \lambda\vdc x$.
			Let $i\in\arg\max_l |\dc x_l|$.  Then we have
			\[ |\lambda||\dc x_i| = |\lambda\dc x_i| = {\left|d_i\dc x_i+\sum_{j\neq i} \varphi(e_{ij})\dc x_j\right|} \le d_i|\dc x_i|+\sum_{j\neq i} |\varphi(e_{ij})||\dc x_j|.\]
			Therefore,
			\[|\dc \lambda| \le \max_i d_i+ \sum_{j\neq i} |\varphi(e_{ij})| =\max_i \ 2d_i = 2\Delta.\]
			
			Furthermore, it is well-known that, if $G$ is a simple connected graph with maximum vertex degree $\Delta$, then $\rho_Q(G) \le 2\Delta$, and equality holds if and only if $G$ is regular.   By Theorem \ref{thm:spectrum_Lap}, the equality in this proposition holds if and only if   $G$ is $\Delta$-regular and $\Phi\sim \Phi^1$.
			
			This completes the proof.}
	\end{proof}
	
	%		
	%		\section{Examples}	
	%		\subsection{The cycle $\udcset$-gain graphs}
	%		
	%		\subsection{The path $\udcset$-gain graphs}
	%		
	%		
	%		
	
	\bigskip

	{\bf Acknowledgment}
	We are thankful to Professors Zhuoheng He  and Guangjing Song for their help.

	\bigskip
	
	{\section*{Declarations}
		
		{\bf Funding} This research is supported by the R\&D project of Pazhou Lab (Huangpu) (No. 2023K0603), the National Natural Science Foundation of China   (No. 12371348), and the Fundamental Research Funds for the Central Universities (Grant No. YWF-22-T-204).
		
		\noindent
		{\bf Conflicts of Interest} The author declares no conflict of interest.
		
		\noindent
		{\bf Data Availability} Data will be made available on reasonable request.
	}

	% \vspace{100pt}

\end{document}